\documentclass[12pt]{article}

\usepackage{a4wide,epsfig}
\usepackage{amsfonts,amsmath}
\usepackage{subcaption}
\usepackage{algorithm}
\usepackage{algpseudocode}
\usepackage{pgfplotstable}

\newtheorem{theorem}{Theorem}[section]

\newtheorem{corollary}[theorem]{Corollary}
\newtheorem{lemma}[theorem]{Lemma}

\newtheorem{remark}{Remark}[section]
\newenvironment{proof}{\textbf{Proof.}}{\hfill $\Box$}

\setcounter{topnumber}{20}
\setcounter{bottomnumber}{20}
\setcounter{totalnumber}{20}

\numberwithin{equation}{section} 

\begin{document}

\setcounter{page}{1}

\title{Regularization and finite element error estimates for elliptic
  distributed optimal control problems with energy regularization and
state or control constraints}
\author{Peter~Gangl$^1$, Richard~L\"oscher$^2$, Olaf~Steinbach$^2$}
\date{$^1$Johann Radon Institute for Computational and
  Applied Mathematics, \\
Altenberger Stra{\ss}e 69, 4040 Linz, Austria \\[2mm]
$^2$Institut f\"ur Angewandte Mathematik, TU Graz, \\[1mm] 
Steyrergasse 30, 8010 Graz, Austria}

\maketitle

\begin{abstract}
  In this paper we discuss the numerical solution of elliptic distributed
optimal control problems with state or control constraints when the
control is considered in the energy norm. As in the unconstrained case
we can relate the regularization parameter and the finite element mesh
size in order to ensure an optimal order of convergence which only
depends on the regularity of the given target, also including
discontinuous target functions. While in most cases, state or control
constraints are discussed for the more common $L^2$ regularization,
much less is known in the case of energy regularizations.
But in this case, and for both control and state constraints, we can
formulate first kind variational inequalities to determine the
unknown state, from wich we can compute the control in a post
processing step. Related variational inequalities also appear in
obstacle problems, and are well established both from a mathematical
and a numerical analysis point of view. Numerical results confirm
the applicability and accuracy of the proposed approach.

\end{abstract}

\section{Introduction}
Optimal control problems aim to determine a control of a system that drives
the corresponding state as closely as possible to a given desired state
under acceptable costs for the control,
see \cite{Troeltzsch_book} for a thorough overview of the mathematical
theory. Over the past decades, such problems have been studied for a wide
variety of applications. A prominent example is the medical application of
cancer treatment by hyperthermia \cite{DeuflhardSchielaWeiser:2012} where a
heat source should be placed in such a way that the temperature is increased
(only) in the cancerous tissue. The control variable can, in general, be
defined on the full domain or on the boundary, and typically the application
of the control comes at a certain cost which can be measured in different
norms such as the $L^2$ norm or an energy norm. This cost is typically added
to the objective functional with a certain weight and can also be seen as
a regularization of the PDE-constrained optimization problem.
Finite element error estimates of solutions to optimal control problems
have been studied by many authors, see, e.g., \cite{Karkulik:2020}, for an
elliptic boundary control problem or
\cite{ApelSteinbachWinkler:2016, Chowdhury:2017, Gong:2022,
  OfPhanSteinbach:2015} for boundary control with energy regularization.
More recently, time-dependent optimal control problems in the context of
space-time finite element methods and corresponding error estimates were
studied by some of the authors, see, e.g.,
\cite{LangerSteinbachTroeltzschYang:2021, LangerSteinbachYang:2022b,
  LangerSteinbachYang:2022a} for parabolic problems or
\cite{LoescherSteinbach:2022} for the wave equation.
For many optimal control problems, it is important to pose pointwise
constraints for either the state or the control variable, or both. These
constraints can be incorporated in different ways, e.g., by augmented
Lagrangian methods \cite{KarlWachsmuth:2018}, barrier methods
\cite{SchielaWeiser:2010} or by reformulating the optimality system as a
variational inequality \cite{BrezziHagerRaviart:1977, Glowinski:1980,
  LionsStampacchia:1967}. This is closely related to the treatment of
obstacle problems, where constraints can be handled using a penalization
technique, see, e.g., \cite{Kikuchi:1981}.
After discretization, variational inequalities can be solved by means of
primal-dual active set strategies \cite{BergouniouxKunisch:2002} or
semi-smooth Newton methods where the latter two strategies can be shown to
be equivalent \cite{Hintermueller:2002}. In particular the latter class
has been used in different physical contexts such as elasticity
\cite{Kroener:2013}, fluid mechanics \cite{ReyesKunisch:2005}, wave
problems \cite{KroenerKunischVexler:2011}, and for different kinds of
constraints including mixed control-state constraints
\cite{RoeschWachsmuth:2011} or constraints on derivatives of the
state \cite{HintermuellerKunisch:2009}. As in the unconstrained case, the
control can be measured in different norms, depending on the regularity
assumptions on the control, which leads to a different behavior of the
solutions in particular in the case of less regular, i.e.,
discontinuous targets.
The, nowaday, common $L^2$ regularization with state constraints
was already studied in \cite{Falk:1973}. Considering the $L^2$ norm as
energy norm leads to a fourth order elliptic partial differential
equation, see \cite{NeumuellerSteinbach:2021}. The recent survey paper
\cite{Brenner:2020} gives an overview on the numerical analysis
incorporating state constraints in this case. While in most cases,
state or control constraints are discussed in the context of
$L^2$ regularizations, much less is known in the case of energy
regularizations.
The very recent work \cite{GongTan:2023} examines state constraints in the
case of energy regularization for the Laplace equation, which is also a
starting point for our analysis, see \cite{Steinbach:2023}.

In this paper, we consider the problem to find the minimizer
$(u_\varrho , z_\varrho) \in H^1_0(\Omega) \times H^{-1}(\Omega)$
of the functional
\begin{equation}\label{cost functional}
  {\mathcal{J}}(u_\varrho,z_\varrho) :=
  \frac{1}{2} \, \| u_\varrho - \overline{u} \|^2_{L^2(\Omega)} +
  \frac{1}{2} \, \varrho \, \| z_\varrho \|^2_{H^{-1}(\Omega)} 
\end{equation}
subject to the Dirichlet boundary value problem of the Poisson
equation,
\begin{equation}\label{DBVP}
  - \Delta u_\varrho = z_\varrho \quad \mbox{in} \; \Omega, \quad
  u_\varrho = 0 \quad \mbox{on} \; \partial \Omega .
\end{equation}
Here, $\Omega \subset {\mathbb{R}}^n$, $n=1,2,3$, is some bounded
Lipschitz domain, $\overline{u} \in L^2(\Omega)$ is a given target,
and $\varrho \in {\mathbb{R}}_+$ is some regularization parameter
on which the minimizer depends on. The variational formulation of the
Dirichlet boundary value problem \eqref{DBVP} is to find
$u_\varrho \in H^1_0(\Omega)$ such that
\begin{equation}\label{DBVP VF}
  \langle \nabla u_\varrho , \nabla v \rangle_{L^2(\Omega)} =
  \langle z_\varrho , v \rangle_\Omega \quad
  \mbox{for all} \; v \in H^1_0(\Omega),
\end{equation}
where $\langle z_\varrho , v \rangle_\Omega$ denotes the duality pairing
for $z_\varrho \in H^{-1}(\Omega) = [H^1_0(\Omega)]^*$ and
$v \in H^1_0(\Omega)$ as extension
of the inner product in $L^2(\Omega)$. Recall that
$\| \nabla v \|_{L^2(\Omega)}$ defines an equivalent norm for
$v \in H^1_0(\Omega)$, and that the dual norm in
$H^{-1}(\Omega)$ is given by
\[
  \| z \|_{H^{-1}(\Omega)} =
  \sup\limits_{0 \neq v \in H^1_0(\Omega)}
  \frac{\langle z , v \rangle_\Omega}{\| \nabla v \|_{L^2(\Omega)}}
  \quad \mbox{for all} \; z \in H^{-1}(\Omega) .
\]
With this we easily conclude
\[
  \| z_\varrho \|_{H^{-1}(\Omega)}^2 =
  \| \nabla u_\varrho \|_{L^2(\Omega)}^2 =
  \langle z_\varrho , u_\varrho \rangle_\Omega ,
\]
where $u_\varrho \in H^1_0(\Omega)$ is the unique solution of the
variational formulation \eqref{DBVP VF}.
Hence we can write the cost functional \eqref{cost functional}
as reduced cost functional
\begin{equation}\label{reduced cost functional}
  \widetilde{\mathcal{J}}(u_\varrho) =
  \frac{1}{2} \, \| u_\varrho - \overline{u} \|^2_{L^2(\Omega)} +
  \frac{1}{2} \, \varrho \, \| \nabla u_\varrho \|^2_{L^2(\Omega)} .
\end{equation}
In the case of neither state nor control constraints, the minimizer
of the reduced cost functional \eqref{reduced cost functional} is
given as the unique solution $u_\varrho \in H^1_0(\Omega)$ of the
variational formulation
\begin{equation}\label{gradient equation VF}
  \varrho \, \langle \nabla u_\varrho , \nabla v \rangle_{L^2(\Omega)} +
  \langle u_\varrho , v \rangle_{L^2(\Omega)} =
  \langle \overline{u} , v \rangle_{L^2(\Omega)} \quad
  \mbox{for all} \; v \in H^1_0(\Omega) .
\end{equation}
Depending on the regularity of the target $\overline{u}$ we can prove
the following regularization error estimates:

\begin{lemma}[{\cite[Theorem 3.2]{NeumuellerSteinbach:2021}}]
  For $\varrho > 0$,
  let $u_\varrho \in H^1_0(\Omega)$ be the unique solution of the
  variational formulation \eqref{gradient equation VF}.
  Assume $\overline{u} \in H^s_0(\Omega) := [L^2(\Omega),H^1_0(\Omega)]_s$
  for some $s \in [0,1]$ or $\overline{u} \in H^1_0(\Omega) \cap H^s(\Omega)$
  for some $s \in (1,2]$. Then there holds the regularization error estimate
  \begin{equation}\label{regularization error estimate}
    \| u_\varrho - \overline{u} \|_{L^2(\Omega)} \leq c \, \varrho^{s/2} \,
    \| \overline{u} \|_{H^s(\Omega)} .
  \end{equation}
\end{lemma}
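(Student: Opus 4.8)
The plan is to prove \eqref{regularization error estimate} first at the endpoints $s\in\{0,1\}$ and $s=2$ by direct energy arguments, and then to fill in the intermediate values $s\in(0,1)$ and $s\in(1,2)$ by operator interpolation, exploiting that for fixed $\varrho>0$ the map $\overline{u}\mapsto u_\varrho-\overline{u}$ is linear.

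For $s=0$ I would test \eqref{gradient equation VF} with $v=u_\varrho$, obtaining $\varrho\,\|\nabla u_\varrho\|_{L^2(\Omega)}^2+\|u_\varrho\|_{L^2(\Omega)}^2=\langle\overline{u},u_\varrho\rangle_{L^2(\Omega)}\le\|\overline{u}\|_{L^2(\Omega)}\,\|u_\varrho\|_{L^2(\Omega)}$, hence $\|u_\varrho\|_{L^2(\Omega)}\le\|\overline{u}\|_{L^2(\Omega)}$ and therefore $\|u_\varrho-\overline{u}\|_{L^2(\Omega)}\le 2\,\|\overline{u}\|_{L^2(\Omega)}$. For $s=1$, assuming $\overline{u}\in H^1_0(\Omega)$, the test function $v=u_\varrho-\overline{u}\in H^1_0(\Omega)$ is admissible, and splitting $\nabla u_\varrho=\nabla(u_\varrho-\overline{u})+\nabla\overline{u}$ turns \eqref{gradient equation VF} into
\[
  \|u_\varrho-\overline{u}\|_{L^2(\Omega)}^2+\varrho\,\|\nabla(u_\varrho-\overline{u})\|_{L^2(\Omega)}^2
  = -\,\varrho\,\langle\nabla\overline{u},\nabla(u_\varrho-\overline{u})\rangle_{L^2(\Omega)} .
\]
Young's inequality on the right-hand side absorbs half of the gradient term and leaves $\|u_\varrho-\overline{u}\|_{L^2(\Omega)}\le\sqrt{\varrho/2}\,\|\nabla\overline{u}\|_{L^2(\Omega)}\le c\,\varrho^{1/2}\,\|\overline{u}\|_{H^1(\Omega)}$. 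For $s=2$, assuming in addition $\overline{u}\in H^2(\Omega)$, I would instead integrate by parts in the same identity, using $\langle\nabla\overline{u},\nabla(u_\varrho-\overline{u})\rangle_{L^2(\Omega)}=\langle-\Delta\overline{u},u_\varrho-\overline{u}\rangle_{L^2(\Omega)}$ (legitimate since $u_\varrho-\overline{u}\in H^1_0(\Omega)$), and a Cauchy--Schwarz bound gives $\|u_\varrho-\overline{u}\|_{L^2(\Omega)}\le\varrho\,\|\Delta\overline{u}\|_{L^2(\Omega)}\le c\,\varrho\,\|\overline{u}\|_{H^2(\Omega)}$.

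It then remains to interpolate. For fixed $\varrho$ the linear operator $T_\varrho\colon\overline{u}\mapsto u_\varrho-\overline{u}$ maps $L^2(\Omega)\to L^2(\Omega)$ with norm $\le 2$, maps $H^1_0(\Omega)\to L^2(\Omega)$ with norm $\le c\,\varrho^{1/2}$, and maps $H^1_0(\Omega)\cap H^2(\Omega)\to L^2(\Omega)$ with norm $\le c\,\varrho$. Applying the interpolation inequality $\|T_\varrho\|_{[X_0,X_1]_\theta\to L^2(\Omega)}\le\|T_\varrho\|_{X_0\to L^2(\Omega)}^{1-\theta}\,\|T_\varrho\|_{X_1\to L^2(\Omega)}^{\theta}$ with $X_0=L^2(\Omega)$, $X_1=H^1_0(\Omega)$ and $\theta=s$ yields the claim for $s\in(0,1)$, since $[L^2(\Omega),H^1_0(\Omega)]_s=H^s_0(\Omega)$ by definition; applying it with $X_0=H^1_0(\Omega)$, $X_1=H^1_0(\Omega)\cap H^2(\Omega)$ and $\theta=s-1$ yields the claim for $s\in(1,2)$. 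The step requiring care is the identification of the intermediate spaces in the second range, i.e. that $[H^1_0(\Omega),H^1_0(\Omega)\cap H^2(\Omega)]_{s-1}$ agrees, with equivalent norms, with $H^1_0(\Omega)\cap H^s(\Omega)$, together with checking that the interpolation constants are independent of $\varrho$ so that no spurious power of $\varrho$ sneaks in; once the three endpoint estimates are established, this bookkeeping is the only genuine obstacle.
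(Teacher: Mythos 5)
Your proof is correct and takes essentially the same route as the source this lemma is quoted from and as the paper's own analogous treatment of the constrained case (Lemma~\ref{Lemma regularization state constraints} and the ``space interpolation argument'' in the corollary to Theorem~\ref{Thm error state}): endpoint energy estimates at $s=0$, $s=1$, $s=2$ obtained by testing \eqref{gradient equation VF} with $u_\varrho$, respectively $u_\varrho-\overline{u}$, plus integration by parts, followed by interpolation of the linear map $\overline{u}\mapsto u_\varrho-\overline{u}$ with $\varrho$-independent constants. The paper itself does not reprove the statement but cites \cite{NeumuellerSteinbach:2021}, so there is no discrepancy to report.
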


\noindent
Let $V_h \subset H^1_0(\Omega)$ be some finite element space of piecewise
linear and continuous basis functions which are defined with respect
to some admissible decomposition of the domain $\Omega$
into simplicial shape regular
finite elements $\tau_\ell$ of local mesh size $h_\ell$,
and with a global mesh size $h:= \max_\ell h_\ell$. For simplicity we may
assume that $\Omega$ is a polygonally ($n=2$) or polyhedrally ($n=3$)
bounded domain.
The finite element approximation of \eqref{gradient equation VF} is
to find $u_{\varrho h} \in V_h$ such that
\begin{equation}\label{gradient equation FEM}
  \varrho \, \langle \nabla u_{\varrho h}, \nabla v_h \rangle_{L^2(\Omega)} +
  \langle u_{\varrho h} , v_h \rangle_{L^2(\Omega)} =
  \langle \overline{u} , v_h \rangle_{L^2(\Omega)} \quad
  \mbox{for all} \; v_h \in V_h .
\end{equation}
The numerical analysis of this variational formulation as well as
the construction of robust iterative solution methods was considered
in \cite{LangerSteinbachYang:2022a}.

\begin{lemma}[{\cite[Theorem 1]{LangerSteinbachYang:2022a}}]
  Let us assume, for simplicity, that $\Omega \subset {\mathbb{R}}^n$
  is convex, and that the target function satisfies either
  $\overline{u} \in H^s_0(\Omega)$ for $s \in [0,1]$ or
  $\overline{u} \in H^1_0(\Omega) \cap H^s(\Omega)$ for $s \in (1,2]$.
  For the choice $\varrho = h^2$ there holds the error estimate
  \begin{equation}\label{FEM error estimate}
    \| u_{\varrho h} - \overline{u} \|_{L^2(\Omega)} \leq
    c \, h^s \, \| \overline{u} \|_{H^s(\Omega)} .
  \end{equation}
\end{lemma}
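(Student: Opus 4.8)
The natural route is the classical split
\[
  \| u_{\varrho h} - \overline{u} \|_{L^2(\Omega)} \leq
  \| u_{\varrho h} - u_\varrho \|_{L^2(\Omega)} +
  \| u_\varrho - \overline{u} \|_{L^2(\Omega)},
\]
where the second term is already controlled by the regularization error estimate \eqref{regularization error estimate}, giving a contribution $c\,\varrho^{s/2}\|\overline{u}\|_{H^s(\Omega)} = c\,h^s\|\overline{u}\|_{H^s(\Omega)}$ once we insert $\varrho = h^2$. So the whole task reduces to bounding the discretization error $\| u_{\varrho h} - u_\varrho \|_{L^2(\Omega)}$ between the continuous minimizer \eqref{gradient equation VF} and its Galerkin approximation \eqref{gradient equation FEM}, and showing it is also $\mathcal{O}(h^s)$ for the relevant range of $s$.

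For the Galerkin error I would work with the $\varrho$-weighted energy norm $\vertiii{v}^2 := \varrho\,\|\nabla v\|_{L^2(\Omega)}^2 + \|v\|_{L^2(\Omega)}^2$, in which \eqref{gradient equation VF} is coercive and bounded with constant $1$, so Céa's lemma gives $\vertiii{u_{\varrho h} - u_\varrho} \leq \vertiii{u_\varrho - v_h}$ for every $v_h \in V_h$. Taking $v_h = Q_h u_\varrho$ (the $L^2$- or Scott–Zhang quasi-interpolant) and using standard interpolation estimates $\|u_\varrho - Q_h u_\varrho\|_{L^2(\Omega)} \leq c\,h\,\|\nabla u_\varrho\|_{L^2(\Omega)}$ and $\|\nabla(u_\varrho - Q_h u_\varrho)\|_{L^2(\Omega)} \leq c\,\|\nabla u_\varrho\|_{L^2(\Omega)}$, together with the a priori bounds on $u_\varrho$ coming from testing \eqref{gradient equation VF} with $u_\varrho$ itself (which yields $\varrho\,\|\nabla u_\varrho\|_{L^2(\Omega)}^2 + \|u_\varrho\|_{L^2(\Omega)}^2 \leq \|\overline{u}\|_{L^2(\Omega)}^2$, hence $\|\nabla u_\varrho\|_{L^2(\Omega)} \leq \varrho^{-1/2}\|\overline{u}\|_{L^2(\Omega)}$), one controls $\vertiii{u_\varrho - Q_h u_\varrho}$. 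This immediately produces the crude bound $\|u_{\varrho h} - u_\varrho\|_{L^2(\Omega)} \leq c\,h\,\varrho^{-1/2}\|\overline{u}\|_{L^2(\Omega)} = c\,\|\overline{u}\|_{L^2(\Omega)}$ for $\varrho = h^2$, which is the right order only for $s=0$; to reach the full range $s \in [0,2]$ one refines the interpolation step using the extra regularity of $u_\varrho$ (elliptic regularity for \eqref{gradient equation VF} on the convex domain gives $u_\varrho \in H^2(\Omega)$ with a $\varrho$-dependent bound, or more sharply one interpolates between the $s=0$ estimate and an $s=2$ estimate obtained from $\|\nabla u_\varrho\|_{L^2(\Omega)} \lesssim \varrho^{(s-1)/2}\|\overline{u}\|_{H^s(\Omega)}$-type bounds), and then invokes a Nitsche/Aubin–Lions duality argument to shift the $L^2$-error estimate to the sharp power of $h$.

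The main obstacle is the $\varrho$-dependence: every constant from interpolation or elliptic regularity that looks harmless in a fixed-$\varrho$ analysis actually carries powers of $\varrho^{-1/2} = h^{-1}$, so one must track these carefully and make sure that the coupling $\varrho = h^2$ exactly cancels them to leave the clean $h^s$ rate — this is precisely the balance between regularization and discretization error that the paper advertises. In practice this is handled by interpolating (in the $H^s$-scale, via the $K$-method indicated by the bracket notation $[L^2,H^1_0]_s$) between the two extreme estimates $s=0$ and $s=2$, so that the only genuine work is establishing those two endpoint cases cleanly; everything else is the interpolation inequality plus the triangle inequality above. Since the statement is quoted from \cite[Theorem 1]{LangerSteinbachYang:2022a}, I would ultimately cite that reference for the detailed constants, but the scheme above — regularization error from the Lemma, Céa in the weighted norm, $\varrho$-tracked interpolation, duality for the $L^2$ bound, and the choice $\varrho = h^2$ to balance — is the complete logical skeleton.
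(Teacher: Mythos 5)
The paper never proves this lemma itself -- it is imported verbatim from \cite[Theorem 1]{LangerSteinbachYang:2022a} -- so the only fair comparison is with that reference's standard argument, and your skeleton matches it: triangle inequality, regularization estimate \eqref{regularization error estimate}, best approximation (C\'ea) in the $\varrho$-weighted norm, and space interpolation between the endpoints $s=0$ and $s=2$ after setting $\varrho=h^2$. Two sharpenings are needed to make your sketch airtight at the decisive point. First, at the $s=2$ endpoint the key ingredient is the $\varrho$-\emph{uniform} bound $\|\Delta u_\varrho\|_{L^2(\Omega)}\le\|\Delta\overline u\|_{L^2(\Omega)}$, which follows from the strong form $-\varrho\Delta u_\varrho+u_\varrho=\overline u$ together with $\|u_\varrho-\overline u\|_{L^2(\Omega)}\le\varrho\,\|\Delta\overline u\|_{L^2(\Omega)}$, combined with convexity so that $|u_\varrho|_{H^2(\Omega)}\le c\,\|\Delta u_\varrho\|_{L^2(\Omega)}$; a ``$\varrho$-dependent'' $H^2$ bound, as you phrase it, would not close the argument, since any negative power of $\varrho$ reappears as a negative power of $h$ once $\varrho=h^2$. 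Second, no Aubin--Nitsche duality step is needed (and it would be delicate here, because the adjoint regularity constant also carries $\varrho$): since the weighted norm contains the $L^2$ part, best approximation plus the interpolation bound $\|u_\varrho-I_hu_\varrho\|_{L^2(\Omega)}^2+\varrho\,\|\nabla(u_\varrho-I_hu_\varrho)\|_{L^2(\Omega)}^2\le c\,(h^4+\varrho h^2)\,|u_\varrho|_{H^2(\Omega)}^2$ already yields $\|u_\varrho-u_{\varrho h}\|_{L^2(\Omega)}\le c\,h^2\,\|\Delta\overline u\|_{L^2(\Omega)}$ for $\varrho=h^2$; this is exactly the mechanism the paper uses later in Theorem 3.2 and its corollary for the constrained case, where the $s<2$ range is then recovered by interpolating against the trivial $s=0$ bound.
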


\noindent
The aim of this paper is to provide related estimates for both
the regularization and the finite element error when considering
the minimization of \eqref{reduced cost functional} with
either state or control constraints. Note that related work, considering
state constraints, but not with respect to the regularization parameter
$\varrho$, was recently presented in \cite{GongTan:2023}.

The remainder of this paper is structured as follows: In
Section~\ref{Chapter:Regularization} we provide 
estimates for the error $\| u_\varrho - \overline{u} \|_{L^2(\Omega)}$
with respect to the regularization parameter $\varrho$ for both state
and control constraints. These results follow similar as in the
unconstrained case, due to the structure of the variational inequality
to be solved. In order to include state or control constraints we
have to solve a first kind variational inequality to find the unknown
state. Their numerical
approximation using finite element methods is well established, 
and we can transfer the general results to the particular application
of constrained optimal control problems with energy regularization.
The finite element
discretization and the related a priori error estimates are given
in Section~\ref{Chapter FEM}. In a post processing step we finally
compute the control, when the state is known. The resulting discrete
variational inequality can be reformulated as a nonlinear system
of algebraic equations, which can be solved by applying a semi-smooth
Newton method which turns out to be an active set strategy, and
which is described in Section~\ref{Section:Newton}. Several numerical
results are given in Section~\ref{Section:Results} in order to
demonstrate the applicability and accuracy of the proposed approach.
Finally, we summarize and comment on ongoing work.

\section{Regularization error estimates}\label{Chapter:Regularization}
In this section we will discuss regularization error estimates for the
minimization of \eqref{reduced cost functional} with additional
constraints on either the state $u_\varrho$ or the control $z_\varrho$.

\subsection{State constraints}
We consider the reduced functional $\widetilde{\mathcal{J}}(u_\varrho)$
as defined in \eqref{reduced cost functional}, but now we minimize
over the convex subset
\[
  K_s := \Big \{ v \in H^1_0(\Omega) : g_-(x) \leq v(x) \leq g_+(x) \;
  \mbox{for almost all} \; x \in \Omega \Big \},
\]
where
$ g_{\pm} \in H^1_{\Delta}(\Omega):=\{v\in H^1_0(\Omega):\, \Delta v \in L^2(\Omega)\}$ are given barrier functions, and where we assume 
$g_- < g_+$ and $0 \in K_s$ to be satisfied. From this it follows
that $g_- \leq 0$, and $g_+ \geq 0$. The minimizer $u_\varrho \in K_s$
satisfying
\[
  \widetilde{\mathcal{J}}(u_\varrho) = \min\limits_{v \in K_s}
  \widetilde{\mathcal{J}}(v)
\]
is determined as the unique solution $u_\varrho \in K_s$ of the
first kind variational inequality
\begin{equation}\label{VI state constraints}
  \varrho \, \langle \nabla u_\varrho , \nabla (v-u_\varrho)
  \rangle_{L^2(\Omega)} +
  \langle u_\varrho , v - u_\varrho \rangle_{L^2(\Omega)} \geq
  \langle \overline{u} , v - u_\varrho \rangle_{L^2(\Omega)} \quad
  \mbox{for all} \; v \in K_s .
\end{equation}

\noindent
As in the unconstrained case \cite{NeumuellerSteinbach:2021} we can prove the
following regularization error estimates.

\begin{lemma}\label{Lemma regularization state constraints}
  For $\varrho > 0$,
  let $u_\varrho \in K_s$ be the unique solution of the variational
  inequality \eqref{VI state constraints}. For $\overline{u} \in L^2(\Omega)$
  there holds
  \begin{equation}\label{regularization state constraints H H}
    \| u_\varrho - \overline{u} \|_{L^2(\Omega)} \leq
    \| \overline{u} \|_{L^2(\Omega)} ,
  \end{equation}
  while for $\overline{u} \in K_s$ we have
  \begin{equation}\label{regularization state constraints H S}
    \| u_\varrho - \overline{u} \|_{L^2(\Omega)} \leq
    \sqrt{\varrho} \, \| \nabla \overline{u} \|_{L^2(\Omega)},
  \end{equation}
  and
  \begin{equation}\label{regularization state constraints S S}
    \| \nabla (u_\varrho - \overline{u}) \|_{L^2(\Omega)} \leq
    \| \nabla \overline{u} \|_{L^2(\Omega)} .
  \end{equation}
  If in addition $\Delta \overline{u} \in L^2(\Omega)$ is satisfied for
  $\overline{u} \in K_s$,
  \begin{equation}\label{regularization state constraints H Su}
    \| u_\varrho - \overline{u} \|_{L^2(\Omega)} \leq
    \varrho \, \| \Delta \overline{u} \|_{L^2(\Omega)}
  \end{equation}
  as well as
  \begin{equation}\label{regularization state constraints S Su}
    \| \nabla (u_\varrho - \overline{u}) \|_{L^2(\Omega)} \leq
    \sqrt{\varrho} \, \| \Delta \overline{u} \|_{L^2(\Omega)}
  \end{equation}  
  follow.
\end{lemma}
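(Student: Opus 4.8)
The plan is to exploit the standard trick for first‑kind variational inequalities: since the admissible set $K_s$ is convex and $u_\varrho$ is the minimizer, any competitor $v\in K_s$ can be inserted into \eqref{VI state constraints}. The natural choice of competitor is $v=\overline{u}$ itself whenever $\overline{u}\in K_s$, and $v=0$ when we only know $\overline{u}\in L^2(\Omega)$ (recall $0\in K_s$ by assumption). From there the estimates follow by elementary manipulations of the bilinear form $\varrho\langle\nabla\cdot,\nabla\cdot\rangle_{L^2(\Omega)}+\langle\cdot,\cdot\rangle_{L^2(\Omega)}$, exactly mirroring the unconstrained argument in \cite{NeumuellerSteinbach:2021}.

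\textbf{Step 1 (the crude bound \eqref{regularization state constraints H H}).} Take $v=0\in K_s$ in \eqref{VI state constraints}; this gives
\[
  \varrho\,\|\nabla u_\varrho\|_{L^2(\Omega)}^2 + \|u_\varrho\|_{L^2(\Omega)}^2
  \leq \langle \overline{u}, u_\varrho\rangle_{L^2(\Omega)}
  \leq \|\overline{u}\|_{L^2(\Omega)}\,\|u_\varrho\|_{L^2(\Omega)},
\]
so $\|u_\varrho\|_{L^2(\Omega)}\leq\|\overline{u}\|_{L^2(\Omega)}$, and the triangle inequality yields \eqref{regularization state constraints H H}. (Note the constant $1$, not $2$ — this uses that the cross term has a sign.)

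\textbf{Step 2 (the $\sqrt{\varrho}$ estimates when $\overline{u}\in K_s$).} Now choose $v=\overline{u}$ in \eqref{VI state constraints} and rewrite everything in terms of the error $e:=u_\varrho-\overline{u}$. After rearranging one obtains the energy identity
\[
  \varrho\,\|\nabla e\|_{L^2(\Omega)}^2 + \|e\|_{L^2(\Omega)}^2
  \leq -\,\varrho\,\langle\nabla\overline{u},\nabla e\rangle_{L^2(\Omega)},
\]
and then the Cauchy–Schwarz inequality on the right, absorbing $\varrho\|\nabla e\|_{L^2(\Omega)}$, gives simultaneously $\|e\|_{L^2(\Omega)}\leq\sqrt{\varrho}\,\|\nabla\overline{u}\|_{L^2(\Omega)}$ and $\sqrt{\varrho}\,\|\nabla e\|_{L^2(\Omega)}\leq\sqrt{\varrho}\,\|\nabla\overline{u}\|_{L^2(\Omega)}$, i.e.\ \eqref{regularization state constraints H S} and \eqref{regularization state constraints S S}.

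\textbf{Step 3 (the improved rates under $\Delta\overline{u}\in L^2(\Omega)$).} Starting again from the identity $\varrho\|\nabla e\|_{L^2(\Omega)}^2+\|e\|_{L^2(\Omega)}^2\leq -\varrho\langle\nabla\overline{u},\nabla e\rangle_{L^2(\Omega)}$, integrate by parts in the right‑hand side. Since $g_\pm\in H^1_\Delta(\Omega)$ forces $\overline{u}\in H^1_\Delta(\Omega)$ as well (it lies in $K_s$ with $\Delta\overline{u}\in L^2(\Omega)$), and since $e\in H^1_0(\Omega)$ has zero trace, the boundary term vanishes and $-\langle\nabla\overline{u},\nabla e\rangle_{L^2(\Omega)}=\langle\Delta\overline{u},e\rangle_{L^2(\Omega)}$. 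Thus
\[
  \varrho\,\|\nabla e\|_{L^2(\Omega)}^2 + \|e\|_{L^2(\Omega)}^2
  \leq \varrho\,\langle\Delta\overline{u},e\rangle_{L^2(\Omega)}
  \leq \varrho\,\|\Delta\overline{u}\|_{L^2(\Omega)}\,\|e\|_{L^2(\Omega)}.
\]
Dropping the gradient term gives $\|e\|_{L^2(\Omega)}\leq\varrho\,\|\Delta\overline{u}\|_{L^2(\Omega)}$, which is \eqref{regularization state constraints H Su}; feeding this back into the inequality and keeping the gradient term yields $\varrho\|\nabla e\|_{L^2(\Omega)}^2\leq\varrho^2\|\Delta\overline{u}\|_{L^2(\Omega)}^2$, i.e.\ \eqref{regularization state constraints S Su}.

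\textbf{The main obstacle} is modest: it is the integration by parts in Step 3, where one must be careful that $\overline{u}\in K_s$ together with $\Delta\overline{u}\in L^2(\Omega)$ legitimately places $\overline{u}$ in $H^1_\Delta(\Omega)$ so that Green's formula applies with vanishing boundary contribution against the test function $e\in H^1_0(\Omega)$. Everything else is the convexity/projection argument for first‑kind variational inequalities combined with Cauchy–Schwarz and term absorption; the only subtlety worth flagging explicitly is that the sign of the $L^2$ cross term is what produces the clean constants ($1$ rather than a generic $2$) in \eqref{regularization state constraints H H} and \eqref{regularization state constraints S S}.
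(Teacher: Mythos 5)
Your Steps 2 and 3 coincide with the paper's own proof: test with $v=\overline u$, apply Cauchy--Schwarz and absorb the gradient term to get \eqref{regularization state constraints H S} and \eqref{regularization state constraints S S}, then integrate by parts (legitimate since $\Delta\overline u\in L^2(\Omega)$ and $u_\varrho-\overline u\in H^1_0(\Omega)$), drop the gradient term to get \eqref{regularization state constraints H Su}, and feed that back in for \eqref{regularization state constraints S Su}. No issues there.

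Step 1, however, does not establish \eqref{regularization state constraints H H} as written. From your display you deduce the stability bound $\|u_\varrho\|_{L^2(\Omega)}\le\|\overline u\|_{L^2(\Omega)}$ and then invoke the triangle inequality, which only yields $\|u_\varrho-\overline u\|_{L^2(\Omega)}\le 2\,\|\overline u\|_{L^2(\Omega)}$; the parenthetical claim that the constant is $1$ ``because the cross term has a sign'' is asserted but never derived. The repair is exactly what the paper does: keep the error rather than the solution, i.e.\ rearrange the $v=0$ inequality into
\[
  \|u_\varrho-\overline u\|_{L^2(\Omega)}^2+\varrho\,\|\nabla u_\varrho\|_{L^2(\Omega)}^2
  \le \langle \overline u-u_\varrho,\overline u\rangle_{L^2(\Omega)}
  \le \|u_\varrho-\overline u\|_{L^2(\Omega)}\,\|\overline u\|_{L^2(\Omega)} ,
\]
and divide; equivalently, expand $\|u_\varrho-\overline u\|_{L^2(\Omega)}^2$ and use $\langle\overline u,u_\varrho\rangle_{L^2(\Omega)}\ge\|u_\varrho\|_{L^2(\Omega)}^2\ge 0$, which is the sign argument you allude to but do not carry out. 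With this one-line correction your proof is essentially identical to the paper's.
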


\begin{proof}
From the variational inequality \eqref{VI state constraints}
we obviously have
\[
  \varrho \, \langle \nabla u_\varrho ,
  \nabla (v - u_\varrho) \rangle_{L^2(\Omega)} \geq
 \langle \overline{u} - u_\varrho , v - u_\varrho \rangle_{L^2(\Omega)} \quad
 \mbox{for all} \; v \in K_s .
\]
In particular for $v = 0 \in K_s$ this gives
\[
 \| \overline{u} - u_\varrho \|^2_{L^2(\Omega)}
 +
 \varrho \, \langle \nabla u_\varrho , \nabla u_\varrho \rangle_{L^2(\Omega)}
 \leq
 \langle u_\varrho - \overline{u} , \overline{u} \rangle_{L^2(\Omega)}
 \leq
 \| \overline{u} - u_\varrho \|_{L^2(\Omega)} \| \overline{u} \|_{L^2(\Omega)},
\]
i.e., \eqref{regularization state constraints H H} follows.
When assuming $\overline{u} \in K_s$ we can consider
$v=\overline{u}$ to obtain
\begin{eqnarray*}
  \varrho \, \| \nabla (u_\varrho - \overline{u}) \|^2_{L^2(\Omega)} +
  \| u_\varrho - \overline{u} \|_{L^2(\Omega)}^2
  & \leq &  
  \varrho \, \langle \nabla \overline{u} ,
  \nabla (\overline{u} - u_\varrho) \rangle_{L^2(\Omega)} \\
  & \leq &  \varrho \, \| \nabla \overline{u} \|_{L^2(\Omega)}
  \| \nabla (u_\varrho - \overline{u}) \|_{L^2(\Omega)} ,
\end{eqnarray*}
i.e.,
\[
  \| \nabla (u_\varrho - \overline{u}) \|_{L^2(\Omega)} \leq
  \| \nabla \overline{u} \|_{L^2(\Omega)},
\]
that is \eqref{regularization state constraints S S},
and \eqref{regularization state constraints H S},
\[
  \| u_\varrho - \overline{u} \|_{L^2(\Omega)}^2 \leq
  \varrho \, \| \nabla \overline{u} \|_{L^2(\Omega)}^2 \, .
\]
If $\overline{u} \in K_s$ is such that $\Delta \overline{u} \in L^2(\Omega)$
is satisfied, then we conclude
\begin{eqnarray*}
  \varrho \, \| \nabla (u_\varrho - \overline{u}) \|^2_{L^2(\Omega)} +
  \| u_\varrho - \overline{u} \|_{L^2(\Omega)}^2
  & \leq & \varrho \, \langle \nabla \overline{u} ,
           \nabla (\overline{u} - u_\varrho) \rangle_{L^2(\Omega)} \\
  & & \hspace*{-2cm} = \, \varrho \, \langle (- \Delta \overline{u}) ,
        \overline{u} - u_\varrho \rangle_{L^2(\Omega)} \,
  \leq \, \varrho \, \| \Delta \overline{u} \|_{L^2(\Omega)}
           \| u_\varrho - \overline{u} \|_{L^2(\Omega)} ,
\end{eqnarray*}
and hence
\[
  \| u_\varrho - \overline{u} \|_{L^2(\Omega)} \leq
  \varrho \, \| \Delta \overline{u} \|_{L^2(\Omega)} ,
\]
i.e., \eqref{regularization state constraints H Su}, follows. Finally,
\[
  \varrho \, \| \nabla (u_\varrho - \overline{u}) \|_{L^2(\Omega)}^2 \leq
  \varrho \, \| \Delta \overline{u} \|_{L^2(\Omega)}
  \| u_\varrho - \overline{u} \|_{L^2(\Omega)} \leq
  \varrho^2 \, \| \Delta \overline{u} \|_{L^2(\Omega)}^2
\]
implies \eqref{regularization state constraints S Su}.
\end{proof}

\noindent
For the solution $u_\varrho$ of \eqref{VI state constraints}
we introduce the active sets
$\Omega_{s,\pm} := \{ x \in \Omega : u_\varrho(x) = g_\pm(x) \}$.

\begin{lemma}\label{Lemma 2.2}
  For $u_\varrho \in K_s$ being the unique solution of the variational
  inequality \eqref{VI state constraints}, let
  $\lambda := - \varrho \Delta u_\varrho + u_\varrho - \overline{u}
  \in H^{-1}(\Omega)$. Assume $\overline{u} \in K_s\cap H^1_{\Delta}(\Omega)$ and $g_\pm \in H^1_{\Delta}(\Omega)$. 
  Then we conclude $z_\varrho=-\Delta u_\varrho \in L^2(\Omega)$ and hence,
  $\lambda \in L^2(\Omega)$.
\end{lemma}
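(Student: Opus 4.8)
The plan is to exploit the obstacle-problem structure of the variational inequality \eqref{VI state constraints}. The key observation is that the complementarity Lagrange multiplier $\lambda = -\varrho \Delta u_\varrho + u_\varrho - \overline{u}$ acts only on the active sets $\Omega_{s,\pm}$, where $u_\varrho$ coincides with the barrier $g_\pm$; on the inactive set $u_\varrho$ solves the gradient equation \eqref{gradient equation VF} pointwise in a strong sense, so there $-\varrho\Delta u_\varrho = \overline{u} - u_\varrho \in L^2(\Omega)$ since $\overline{u} \in L^2(\Omega)$. The difficulty is thus concentrated on the active sets, and this is where the assumptions $\overline{u}, g_\pm \in H^1_\Delta(\Omega)$ enter.

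First I would recall the standard regularity result for obstacle-type variational inequalities: since \eqref{VI state constraints} is (after dividing by $\varrho$) an obstacle problem for the operator $-\Delta + \varrho^{-1} I$ with right-hand side $\varrho^{-1}\overline{u} \in L^2(\Omega)$ and with $H^1_\Delta$-barriers $g_\pm$, the solution satisfies $u_\varrho \in H^1_\Delta(\Omega)$, i.e., $\Delta u_\varrho \in L^2(\Omega)$. This is classical (cf. the treatment of obstacle problems referenced in the introduction, e.g. \cite{Kikuchi:1981, Glowinski:1980}) and relies precisely on the $L^2$-regularity of the data and of $\Delta g_\pm$. Concretely, one can argue via the penalized problems $-\varrho \Delta u_\varepsilon + u_\varepsilon + \frac{1}{\varepsilon}\big[(u_\varepsilon - g_+)_+ - (u_\varepsilon - g_-)_-\big] = \overline{u}$, test with $-\Delta(u_\varepsilon - g_\pm)$ on the respective regions and use the monotonicity of the penalty terms together with $\Delta g_\pm \in L^2(\Omega)$ to obtain a bound on $\|\Delta u_\varepsilon\|_{L^2(\Omega)}$ uniform in $\varepsilon$; passing to the limit $\varepsilon \to 0$ gives $\Delta u_\varrho \in L^2(\Omega)$. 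Alternatively, one can cite the well-posedness/regularity of such first-kind variational inequalities directly.

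Once $\Delta u_\varrho \in L^2(\Omega)$ is established, the conclusion $z_\varrho = -\Delta u_\varrho \in L^2(\Omega)$ is immediate, and then $\lambda = -\varrho\Delta u_\varrho + u_\varrho - \overline{u} = \varrho z_\varrho + u_\varrho - \overline{u}$ is a sum of $L^2(\Omega)$-functions, hence $\lambda \in L^2(\Omega)$, as claimed. The assumption $\overline{u} \in K_s$ (rather than just $\overline{u}\in L^2(\Omega)$) is what makes the unconstrained minimizer a legitimate competitor and, more importantly, it is needed so that the active sets are well-behaved: where $\overline{u}$ itself satisfies the constraints with room to spare one expects inactivity, and the compatibility of $\overline{u}$ with $g_\pm$ in $H^1_\Delta$ controls the jump of $\Delta u_\varrho$ across the free boundary.

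The main obstacle I anticipate is making the active-set argument rigorous without circularity: one cannot a priori speak of $\Delta u_\varrho$ on $\Omega_{s,\pm}$ before knowing $u_\varrho \in H^1_\Delta(\Omega)$. The clean route is therefore the penalization/approximation argument sketched above, where all quantities are smooth enough at the penalized level and one only needs uniform $L^2$ bounds; the role of $g_\pm \in H^1_\Delta(\Omega)$ is exactly to absorb the penalty contributions. A secondary technical point is the use of $\Delta u_\varrho = \Delta g_\pm$ almost everywhere on $\Omega_{s,\pm}$ (a consequence of $u_\varrho = g_\pm$ there and of both functions lying in $H^1_\Delta$), which is standard but should be invoked explicitly.
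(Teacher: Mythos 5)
Your argument is correct in substance, but it follows a genuinely different route from the paper. The paper's proof is direct and self-contained: it tests \eqref{VI state constraints} with $v=u_\varrho\pm w$ for $w\in H^1_0(\Omega)$ vanishing on the coincidence sets to show that $\lambda=0$ on $\Omega\setminus\Omega_{s,\pm}$; on the active sets it identifies $\Delta u_\varrho=\Delta g_\pm\in L^2(\Omega_{s,\pm})$; and on the inactive set it reads off $\varrho\,\Delta u_\varrho=u_\varrho-\overline{u}$ and invokes the regularization estimate \eqref{regularization state constraints H Su} from Lemma \ref{Lemma regularization state constraints} to obtain the explicit, $\varrho$-independent bound $\|\Delta u_\varrho\|_{L^2(\Omega\setminus\Omega_{s,\pm})}\leq\|\Delta\overline{u}\|_{L^2(\Omega)}$ --- a quantitative piece of information that is reused verbatim in the proof of Theorem \ref{Thm error state}. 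You instead delegate the core difficulty to classical two-obstacle regularity via penalization (in the spirit of Brezis--Stampacchia, which the paper itself cites in Remark 2.1), deriving a uniform bound on the penalty term from $\Delta g_\pm\in L^2(\Omega)$ and passing to the limit. This is legitimate, and it has the merit of sidestepping the circularity you correctly identify in speaking of $\Delta u_\varrho$ on $\Omega_{s,\pm}$ before any second-order regularity is known (a point the paper treats rather formally); to make it complete you would still need to fix the test function at the penalized level (e.g.\ test with the penalty term itself, using the disjoint supports of $(u_\varepsilon-g_+)_+$ and $(u_\varepsilon-g_-)_-$, or justify testing with $-\Delta(u_\varepsilon-g_\pm)$). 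Two caveats: your route, as stated, yields $\Delta u_\varrho\in L^2(\Omega)$ but not the $\varrho$-uniform bound in terms of $\|\Delta\overline{u}\|_{L^2(\Omega)}$ and $\|\Delta g_\pm\|_{L^2(\Omega)}$ that the paper extracts and needs for the $\varrho=h^2$ error estimate \eqref{VI State FEM error} (though it can be recovered by tracking constants and using \eqref{regularization state constraints H Su}); and your explanation of why $\overline{u}\in K_s\cap H^1_\Delta(\Omega)$ is assumed is not quite the paper's reason --- it is not about the active sets being ``well-behaved'' but precisely about making \eqref{regularization state constraints H Su} available, i.e.\ the $\varrho$-uniform control of $u_\varrho-\overline{u}$ on the inactive set; for mere membership $\Delta u_\varrho\in L^2(\Omega)$ at fixed $\varrho$, your penalization argument needs only $\overline{u}\in L^2(\Omega)$.
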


\begin{proof}
  When using integration by parts we can write
\eqref{VI state constraints} as
\[
  \langle - \varrho \Delta u_\varrho + u_\varrho - \overline{u} ,
  v-u_\varrho \rangle_\Omega \geq 0 \quad \mbox{for all} \; v \in K_s,
\]
i.e.,
\begin{equation}\label{VI state lambda}
  \langle \lambda , v - u_\varrho \rangle_\Omega \geq 0
  \quad \mbox{for all} \; v \in K_s .
\end{equation}
The definition of $\lambda$ implies
\[
  \lambda + \varrho \Delta u_\varrho = u_\varrho - \overline{u} \in L^2(\Omega).
\]
For $x \in \Omega_\pm$ we have $u_\varrho(x)=g_\pm(x)$, and hence
$\Delta u_\varrho = \Delta g_\pm \in L^2(\Omega_{s,\pm})$ as well as
$\lambda \in L^2(\Omega_{s,\pm})$. Let $ w \in H^1_0(\Omega)$ satisfying 
\[
  0 \leq w(x) \leq \min \Big \{ g_+(x) - u_\varrho(x),
  u_\varrho(x)-g_-(x) \Big \} \quad \mbox{for} \; x \in \Omega ,
\]
i.e., $w(x)=0$ for $x \in \Omega_{s,\pm}$.
For $ v = u_\varrho + w \in K_s$ we then obtain from
\eqref{VI state lambda}
$\langle \lambda , w \rangle_\Omega \geq 0$,
while for $ v = u_\varrho - w \in K_s $ we conclude
$\langle \lambda , w \rangle_\Omega \leq 0$.
Hence we have
$\langle \lambda , w \rangle_\Omega = 0$ for all
$w \in H^1_0(\Omega \backslash \Omega_{s,\pm})$,
i.e., $\lambda = 0$ in $H^{-1}(\Omega \backslash \Omega_{s,\pm})$,
which remains true in $L^2(\Omega \backslash \Omega_{s,\pm})$. This
already gives $\lambda \in L^2(\Omega)$. Moreover,
by $0=\lambda =-\varrho\Delta u_\varrho+u_\varrho-\overline u$ in
$\Omega\setminus \Omega_{s,\pm}$ and
Lemma \ref{Lemma regularization state constraints} we obtain
\[
  \| \varrho \Delta u_\varrho \|_{L^2(\Omega \backslash \Omega_{s,\pm})} =
  \| u_\varrho - \overline{u} \|_{L^2(\Omega \backslash \Omega_{s,\pm})} \leq
  \| u_\varrho - \overline{u} \|_{L^2(\Omega)} \leq \varrho \,
  \| \Delta \overline{u} \|_{L^2(\Omega)},
\]
which implies
\[
  \| \Delta u_\varrho \|_{L^2(\Omega \backslash \Omega_\pm)} \leq
  \| \Delta \overline{u} \|_{L^2(\Omega)},
\]
i.e., $\Delta u_\varrho \in L^2(\Omega\setminus \Omega_\pm)$ and together
with $\Delta u_\varrho = \Delta g_\pm$ in $\Omega_{s,\pm}$,
$\Delta u_\varrho \in L^2(\Omega)$, independent of $\varrho$.
\end{proof}

\noindent
Due to $\lambda \in L^2(\Omega)$ we can write \eqref{VI state lambda} as
\[
  \int_\Omega \lambda(x) \, [v(x)-u_\varrho(x)] \, dx \geq 0
  \quad \mbox{for all} \; v \in K_s .
\]
For arbitrary $w_+ \in H^1_0(\Omega)$ satisfying
$0 \leq w_+(x) \leq g_+(x) - u_\varrho(x)$ for almost all $x \in \Omega$
we have $ v = u_\varrho + w_+ \in K_s$, and we conclude
\[
  \int_{\Omega \backslash \Omega_+} \lambda(x) \, w_+(x) \, ds_x \geq 0 \quad
  \mbox{for all} \; w_+ \in H^1_0(\Omega\backslash \Omega_{s,+}), \quad
  w_+ \geq 0 .
\]
Hence we obtain $\lambda(x) \geq 0$ for almost all
$x \in \Omega \backslash \Omega_{s,+}$.
In the same way we choose $w_- \in H^1_0(\Omega)$ satisfying
$ g_-(x) - u_\varrho(x) \leq w_-(x) \leq 0$ for almost all $x \in \Omega$.
Hence we have $v = u_\varrho + w_- \in K_s$, and we conclude
\[
  \int_{\Omega \backslash \Omega_-} \lambda(x) \, w_-(x) \, dx \geq 0 \quad
  \mbox{for all} \; w_- \in H^1_0(\Omega \backslash \Omega_{s,-}), \quad
  w_- \leq 0 ,
\]
i.e.,
$\lambda(x) \leq 0$ for almost all $x \in \Omega \backslash \Omega_{s,-}$.
With this we finally obtain the complementarity conditions which hold
for almost all $x \in \Omega$:
\[ 
  g_-(x) < u_\varrho(x) < g_+(x) \, : \, \lambda(x)=0 ; \,
  u_\varrho(x) = g_-(x) \, : \, \lambda(x) \geq 0 ; \,
  u_\varrho(x) = g_+(x) \, : \, \lambda(x) \leq 0. 
\]

\begin{remark}
The variational inequality \eqref{VI state constraints} corresponds
to the two obstacle problem as considered, e.g., in
{\rm \cite{BrezisStampacchia:1968}}, where also a more general
discussion on the regularity of solutions is given, i.e., {\rm
  \cite[Theor\'em\`e I.1, Remarque I.4, Remarque I.5]{BrezisStampacchia:1968}},
which also fits our application.
\end{remark}

\subsection{Control constraints}
Since $- \Delta : H^1_0(\Omega) \to H^{-1}(\Omega)$ defines an
isomorphism, we can also write $u_\varrho = S z_\varrho$, where
$S : H^{-1}(\Omega) \to H^1_0(\Omega)$ is the solution operator
of the Dirichlet boundary value problem \eqref{DBVP}. Instead of
\eqref{cost functional} and \eqref{reduced cost functional}
we now consider the reduced cost functional
\begin{equation}\label{reduced cost functional control}
  \widehat{\mathcal{J}}(z_\varrho) =
  \frac{1}{2} \, \| S z_\varrho - \overline{u} \|^2_{L^2(\Omega)} +
  \frac{1}{2} \, \varrho \, \langle S z_\varrho , z_\varrho \rangle_\Omega
  \quad \mbox{for} \; z_\varrho \in H^{-1}(\Omega) .
\end{equation}
Box constraints in $H^{-1}(\Omega)$ are defined in weak form, i.e., for given
$f_{\pm} \in L^2(\Omega)$ we define
\begin{equation}\label{box constraints weak}
  Z_c := \Big \{ z \in H^{-1}(\Omega) : 
             \langle f_- , v \rangle_{L^2(\Omega)} \leq
             \langle z_\varrho , v \rangle_\Omega
             \leq \langle f_+ , v \rangle_{L^2(\Omega)} \;
             \forall \, v \in H^1_0(\Omega), \; v(x) \geq 0 \Big \} .
\end{equation}
Hence we find the minimizer $z_\varrho \in Z_c$ of the reduced cost functional
\eqref{reduced cost functional control} as the unique solution of the
variational inequality
\begin{equation}\label{VI control constraints z}
  \langle S^* S z_\varrho + \varrho \, S z_\varrho ,
  \psi - z_\varrho \rangle_\Omega \geq
  \langle S^* \overline{u} , \psi - z_\varrho \rangle_\Omega
  \quad \mbox{for all} \; \psi \in Z_c .
\end{equation}
When using $u_\varrho = S z_\varrho$ and the fact that $S$ is self-adjoint,
this can be written as
\[
  \langle u_\varrho + \varrho \, z_\varrho ,
  v - u_\varrho \rangle_\Omega \geq
  \langle \overline{u} , v - u_\varrho \rangle_\Omega
  \quad \mbox{for all} \; v = S \psi, \; \psi \in Z_c .
\]
When introducing
\begin{equation}\label{box constraints weak u}
  K_c :=  \Big \{ u \in H^1_0(\Omega) : 
             \langle f_- , v \rangle_{L^2(\Omega)} \leq
             \langle \nabla u , \nabla v \rangle_{L^2(\Omega)}
             \leq \langle f_+ , v \rangle_{L^2(\Omega)} \,
  \forall \, v \in H^1_0(\Omega), \, v(x) \geq 0 \Big \} ,
\end{equation}
and using $z_\varrho = - \Delta u_\varrho$,
we finally end up with
a variational inequality to find $u_\varrho \in K_c$ such that
\begin{equation}\label{VI control constraints u}
  \langle u_\varrho , v - u_\varrho \rangle_{L^2(\Omega)} +
  \varrho \, \langle \nabla u_\varrho , \nabla (v-u_\varrho) \rangle_{L^2(\Omega)}
  \geq
  \langle \overline{u} , v - u_\varrho \rangle_{L^2(\Omega)}
  \quad \mbox{for all} \; v \in K_c .
\end{equation}
Since the variational inequality \eqref{VI control constraints u} coincides
with \eqref{VI state constraints}, all the regularization error estimates
as given in Lemma \ref{Lemma regularization state constraints} remain valid, but
we have to assume $\overline{u} \in K_c$ instead of
$\overline{u} \in K_s$, when required.

For the unique solution $u_\varrho \in K_c$ and for the target
$\overline{u} \in L^2(\Omega)$
we define $w \in H^1_0(\Omega)$ as the unique weak solution of the
Dirichlet boundary value problem
\begin{equation}\label{PDE Def w}
  - \Delta w = - \varrho \Delta u_\varrho + u_\varrho - \overline{u} \quad
  \mbox{in} \; \Omega, \quad w = 0 \quad \mbox{on} \; \partial \Omega,
\end{equation}
satisfying
\[
  \langle \nabla w , \nabla v  \rangle_{L^2(\Omega)} =
  \varrho \, \langle \nabla u_\varrho , \nabla v \rangle_{L^2(\Omega)}
  + \langle u_\varrho - \overline{u} , v \rangle_{L^2(\Omega)}
  \quad \mbox{for all} \; v \in H^1_0(\Omega).
\]
When using integration by parts we can write this as
\[
  \langle - \Delta w + \varrho \Delta u_\varrho , v \rangle_\Omega =
  \langle u_\varrho - \overline{u} , v \rangle_{L^2(\Omega)}
  \quad \mbox{for all} \; v \in H^1_0(\Omega) .
\]
For $f_\pm \in L^2(\Omega)$ we define $g_{c,\pm} \in H^1_0(\Omega)$
as unique solutions of the Dirichlet boundary value problems
\[
  - \Delta g_{c,\pm} = f_\pm \quad \mbox{in} \; \Omega, \quad
  g_{c,\pm} = 0 \quad \mbox{on} \; \partial \Omega, 
\]
and we introduce
$\Omega_{c,\pm} := \{ x\in \Omega : u_\varrho(x) = g_{c,\pm}(x) \}$. Due to
$f_\pm \in L^2(\Omega)$ we therefore have
$f_\pm(x) = -  \Delta g_{c,\pm}(x) = - \Delta u_\varrho(x)$ for almost all
$x \in \Omega_{c,\pm}$.

\begin{lemma}\label{Lemma 2.3}
  For $u_\varrho \in K_c$ being the unique solution of the variational
  inequality \eqref{VI control constraints u}, let $w \in H^1_0(\Omega)$
  be the weak solution of the Dirichlet boundary value problem
  \eqref{PDE Def w}. Assume $\overline{u} \in K_c\cap H^1_{\Delta}(\Omega)$ and $f_\pm \in L^2(\Omega)$.
  Then we conclude $z_\varrho=-\Delta u_\varrho \in L^2(\Omega)$, and hence,
  $\Delta w \in L^2(\Omega)$.
\end{lemma}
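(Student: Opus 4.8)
The plan is to run the argument for Lemma~\ref{Lemma 2.2} once more, exploiting that the variational inequality \eqref{VI control constraints u} has \emph{exactly} the structure of \eqref{VI state constraints} (same bilinear form, same right-hand side, convex admissible set), now with $K_c$ in place of $K_s$ and the barriers $g_{c,\pm}$ playing the role of $g_\pm$. Setting $\lambda := -\varrho\Delta u_\varrho + u_\varrho - \overline u \in H^{-1}(\Omega)$, integration by parts turns \eqref{VI control constraints u} into $\langle \lambda, v - u_\varrho\rangle_\Omega \ge 0$ for all $v \in K_c$, and by the definition \eqref{PDE Def w} of $w$ we have $\lambda = -\Delta w$. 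Since $\lambda + \varrho\Delta u_\varrho = u_\varrho - \overline u \in L^2(\Omega)$, everything reduces to showing $z_\varrho = -\Delta u_\varrho \in L^2(\Omega)$; this is equivalent to $\lambda \in L^2(\Omega)$, hence to $\Delta w = (\overline u - u_\varrho) - \varrho z_\varrho \in L^2(\Omega)$.

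The first step, and the decisive one, is that membership $z_\varrho \in Z_c$ by itself already forces $z_\varrho \in L^2(\Omega)$. Testing the two inequalities in \eqref{box constraints weak} (equivalently \eqref{box constraints weak u}) against nonnegative $v \in C_c^\infty(\Omega)$ shows that $z_\varrho - f_-$ and $f_+ - z_\varrho$ are nonnegative distributions on $\Omega$, hence nonnegative Radon measures; their sum equals $f_+ - f_- \in L^2(\Omega)$, so each of them is absolutely continuous with respect to Lebesgue measure with density dominated by $f_+ - f_-$. Consequently $z_\varrho \in L^2(\Omega)$ with $f_-(x) \le z_\varrho(x) \le f_+(x)$ for almost every $x \in \Omega$, in particular $\|z_\varrho\|_{L^2(\Omega)} \le \|\max\{|f_-|,|f_+|\}\|_{L^2(\Omega)}$ independently of $\varrho$. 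With this, $\Delta w = (\overline u - u_\varrho) - \varrho z_\varrho \in L^2(\Omega)$ follows immediately, which is the assertion.

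To stay fully parallel to Lemma~\ref{Lemma 2.2} — and, more to the point, to obtain a bound on $\|\Delta u_\varrho\|_{L^2(\Omega)}$ that does not blow up as $\varrho \to 0$ — I would also carry out the active/inactive split $\Omega = \Omega_{c,-}\cup\Omega_{c,+}\cup\Omega_0$ with $\Omega_0 := \Omega\setminus(\Omega_{c,-}\cup\Omega_{c,+})$. On $\Omega_{c,\pm}$ one has $u_\varrho = g_{c,\pm}$, so by the standard fact that two $H^1$-functions coinciding a.e. on a measurable set have the same Laplacian there, $-\Delta u_\varrho = -\Delta g_{c,\pm} = f_\pm \in L^2(\Omega_{c,\pm})$. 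On $\Omega_0$ one wants $\lambda = 0$, i.e. $\varrho\Delta u_\varrho = -(u_\varrho - \overline u)$, whence Lemma~\ref{Lemma regularization state constraints}, specifically \eqref{regularization state constraints H Su} (this is where the hypothesis $\overline u \in K_c \cap H^1_\Delta(\Omega)$ is used), gives $\|\Delta u_\varrho\|_{L^2(\Omega_0)} = \varrho^{-1}\|u_\varrho - \overline u\|_{L^2(\Omega_0)} \le \|\Delta\overline u\|_{L^2(\Omega)}$; combined with the active-set piece this yields $\|\Delta u_\varrho\|_{L^2(\Omega)} \le \|\max\{|f_-|,|f_+|\}\|_{L^2(\Omega)} + \|\Delta\overline u\|_{L^2(\Omega)}$ uniformly in $\varrho$, hence a uniform bound on $\Delta w$ as well.

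I expect the genuine obstacle to be precisely the inactive-set step $\lambda = 0$ on $\Omega_0$. Unlike for $K_s$, the set $K_c$ does \emph{not} constrain $u_\varrho$ pointwise; it is a box constraint on $z_\varrho = -\Delta u_\varrho$, so $\lambda = 0$ cannot be obtained by perturbing $u_\varrho$ by an arbitrary small $H^1_0$-function supported on $\Omega_0$ (such a perturbation generally leaves $K_c$). The perturbation argument must instead be run in the $z$-variable, using \eqref{VI control constraints z} rewritten — via $S = S^*$, $Sz_\varrho = u_\varrho$ and $u_\varrho + \varrho z_\varrho - \overline u = -\Delta w$ — as $\langle w, \psi - z_\varrho\rangle_\Omega \ge 0$ for all $\psi \in Z_c$, with admissible perturbations $\psi = z_\varrho + t\eta$, $\eta \in L^\infty$ supported where $z_\varrho$ is strictly between $f_-$ and $f_+$; this already presupposes the $L^2$-regularity of $z_\varrho$, which is why the first step must come first. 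Once $z_\varrho \in L^2(\Omega)$ is in hand, the rest — identifying the active sets, checking the perturbations remain in $Z_c$, and invoking Lemma~\ref{Lemma regularization state constraints} — is routine bookkeeping.
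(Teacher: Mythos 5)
Your proposal is correct, and its core argument is genuinely different from the paper's. The paper proves the lemma through the optimality system: it works on the active sets $\Omega_{c,\pm}=\{u_\varrho=g_{c,\pm}\}$ where $-\Delta u_\varrho=f_\pm$, constructs feasible perturbations $v_\pm\in K_c$ as solutions of auxiliary Dirichlet problems $-\Delta v_\pm=-\Delta u_\varrho\pm\psi$, deduces from the variational inequality that $w=0$ a.e.\ on the inactive set, and then invokes the regularization estimate \eqref{regularization state constraints H Su} (this is where $\overline u\in K_c\cap H^1_\Delta(\Omega)$ enters) to bound $\|\Delta u_\varrho\|_{L^2(\Omega\setminus\Omega_{c,\pm})}$ by $\|\Delta\overline u\|_{L^2(\Omega)}$. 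You instead observe that membership in $Z_c$ alone already does the job: $z_\varrho-f_-$ and $f_+-z_\varrho$ are nonnegative distributions, hence nonnegative Radon measures summing to $(f_+-f_-)\,dx$, so both are absolutely continuous with densities dominated by $f_+-f_-$, giving $z_\varrho\in L^2(\Omega)$ with $f_-\le z_\varrho\le f_+$ a.e.\ and the $\varrho$-uniform bound $\|z_\varrho\|_{L^2(\Omega)}\le\|\max\{|f_-|,|f_+|\}\|_{L^2(\Omega)}$; then $\Delta w=(\overline u-u_\varrho)-\varrho z_\varrho\in L^2(\Omega)$ is immediate. This measure-theoretic squeeze is sound, more elementary, uses neither the variational inequality nor the hypothesis on $\overline u$, and in fact yields a pointwise bound strong enough to replace the paper's $\|\Delta u_\varrho\|$-estimate in the proof of Theorem \ref{Thm error control}. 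What the paper's longer route buys, and your first step does not, is the structural information exploited right after the lemma: $w=0$ on the inactive set, the sign of $w$ on $\Omega_{c,\pm}$, i.e.\ the complementarity conditions, and the specific inactive-set bound $\|\Delta u_\varrho\|_{L^2(\Omega\setminus\Omega_{c,\pm})}\le\|\Delta\overline u\|_{L^2(\Omega)}$. Your supplementary active/inactive discussion, which would recover this, is only sketched (the $z$-variable perturbations need the quantitative-room argument on sets where $f_-+\varepsilon\le z_\varrho\le f_+-\varepsilon$, and the identity $\Delta u_\varrho=\Delta g_{c,\pm}$ a.e.\ on the coincidence set needs the usual Sobolev coincidence-set fact, which the paper also uses tacitly), but since the lemma's assertion is exactly the $L^2$-regularity of $z_\varrho$ and $\Delta w$, your blind proof establishes it, even under weaker assumptions.
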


\begin{proof}
  The definition of $w \in H^1_0(\Omega)$ as weak solution of
  the Poisson equation in \eqref{PDE Def w} implies
  \[
    - \Delta w(x) = \varrho \, f_\pm(x) + u_\varrho(x) - \overline{u}(x) \quad
    \mbox{for almost all} \; x \in \Omega_{c,\pm},
  \]
  and hence, $\Delta w \in L^2(\Omega_{c,\pm})$ follows.
  Since $u_\varrho \in K_c$ is the unique solution of the variational
  inequality \eqref{VI control constraints u}, and using the definition
  of $w \in H^1_0(\Omega)$, this gives
  \[
    \langle \nabla w , \nabla (v - u_\varrho) \rangle_{L^2(\Omega)} \geq 0
    \quad \mbox{for all} \; v \in K_c,
  \]
  or equivalently,
  \[
    \langle w , - \Delta v + \Delta u_\varrho \rangle_\Omega \geq 0
    \quad \mbox{for all} \; v \in K_c .
  \]
  Let $ v_+ \in H^1_0(\Omega)$ be the unique solution of the
  Dirichlet boundary value problem
  \[
    - \Delta v_+ = - \Delta u_\varrho + \psi \quad
    \mbox{in} \; \Omega, \quad v_+ = 0 \quad \mbox{on} \; \partial \Omega ,
  \]
  where $\psi \in L^2(\Omega)$, $\psi(x) \geq 0$ for almost all
  $x \in \Omega$, is given. To ensure $v_+ \in K_c$  we need to assume
  \[
    \langle \psi , v \rangle_{L^2(\Omega)} \leq
    \langle f_+ + \Delta u_\varrho , v \rangle_\Omega \quad
    \mbox{for all} \; v \in H^1_0(\Omega), \; v \geq 0 .
  \]
  From this we conclude, when considering $v \in H^1_0(\Omega_{c,+})$,
  $\psi(x) = 0$ for almost all $x \in \Omega_{c,+}$, and 
  \begin{equation}\label{w psi +}
    \langle w , \psi \rangle_{L^2(\Omega)} = \langle w , \psi \rangle_{L^2(\Omega\setminus\Omega_{c,+})} \geq 0 \, .
  \end{equation}
  Next, and using the same $\psi$ as above,
  let $v_- \in H^1_0(\Omega)$ be the unique solution of the
  Dirichlet boundary value problem
  \[
    - \Delta v_- = - \Delta u_\varrho - \psi \quad
    \mbox{in} \; \Omega, \quad v_- = 0 \quad \mbox{on}\;
    \partial \Omega.
  \]
  To ensure $v_- \in K_c$ we now have to satisfy
  \[
    \langle \psi , v \rangle_\Omega \leq
    - \langle f_- + \Delta u_\varrho , v \rangle_\Omega
    \quad \mbox{for all} \;
    v \in H^1_0(\Omega), \; v \geq 0 .
  \]
  This gives $\psi(x) = 0$ for almost all $x \in \Omega_{c,-}$, and we have
  $\langle w , \psi \rangle_{L^2(\Omega\setminus\Omega_{c,-})} \leq 0$.
  Hence we conclude
  $\langle w , \psi \rangle_{L^2(\Omega\backslash \Omega_{c,\pm})} = 0$
  for all $\psi \in L^2(\Omega\setminus\Omega_{c,\pm})$
  satisfying
  \[
    \langle \psi , v \rangle_\Omega \leq
    \min \Big \{ \langle f_+ + \Delta u_\varrho , v \rangle_\Omega,
    - \langle f_- + \Delta u_\varrho , v \rangle_\Omega \Big \}
    \quad \mbox{for all} \; v \in H^1_0(\Omega), \; v \geq 0 ,
  \]
  and therefore $w(x)=0$ for almost all
  $x \in \Omega \backslash \Omega_{c,\pm}$ follows. Using
  Lemma \ref{Lemma regularization state constraints}, this implies
  \[
    \| \varrho \, \Delta u_\varrho \|_{L^2(\Omega\backslash \Omega_{c,\pm})} =
    \| u_\varrho - \overline{u} \|_{L^2(\Omega \backslash \Omega_{c,\pm})} \leq
    \| u_\varrho - \overline{u} \|_{L^2(\Omega)} \leq
    \varrho \, \| \Delta \overline{u} \|_{L^2(\Omega)},
  \]
  i.e.,
  \[
    \| \Delta u_\varrho \|_{L^2(\Omega \backslash \Omega_{c,\pm})} \leq
    \| \Delta \overline{u} \|_{L^2(\Omega)} .
  \]
  Together with $- \Delta u_\varrho = f_\pm \in L^2(\Omega_{c,\pm})$ this
  gives $- \Delta u_\varrho \in L^2(\Omega)$, and
  $- \Delta w \in L^2(\Omega)$.
  \end{proof}

\noindent
From the proof of Lemma \ref{Lemma 2.3} we already have
$f_-(x) < - \Delta u_\varrho(x) < f_+(x)$ and $w(x) = 0$ 
for $x \in \Omega \backslash \Omega_{c,\pm}$.
Moreover, \eqref{w psi +} gives
$\langle w , \psi \rangle_{L^2(\Omega_{c,-})} \geq 0$
for all $\psi \in L^2(\Omega_{c,-})$ with $\psi \geq 0$,
and hence we obtain
$- \Delta u_\varrho(x) = f_-(x)$ and $w(x) \geq 0$ for
$x \in \Omega_{c,-}$. In the same way we conclude
$- \Delta u_\varrho(x) = f_+(x)$ and $w(x) \leq 0$ for
$x \in \Omega_{c,+}$.
Note that these relations are the complimentarity conditions of the
variational inequality \eqref{VI control constraints u}.

\section{Finite element discretization}\label{Chapter FEM}
Let us consider the variational inequality to find $u_\varrho \in K$
such that
\begin{equation}\label{VI general}
  \varrho \, \langle \nabla u_\varrho , \nabla (v-u_\varrho)
  \rangle_{L^2(\Omega)} + \langle u_\varrho , v-u_\varrho \rangle_{L^2(\Omega)}
  \geq \langle \overline{u} , v - u_\varrho \rangle_{L^2(\Omega)}
  \quad \mbox{for all} \; v \in K,
\end{equation}
which corresponds to \eqref{VI state constraints} with $K=K_s$ in the
case of state constraints, and to \eqref{VI control constraints u} with
$K=K_c$ for control constraints. 
We now assume that $\Omega$ is either convex or sufficiently regular such
that $\| \Delta u \|_{L^2(\Omega)}$ defines an equivalent norm 
in $H^1_0(\Omega) \cap H^2(\Omega) = H^1_\Delta(\Omega)$. 

As in the unconstrained case, let
$V_h = S_h^1(\Omega) \cap H^1_0(\Omega) =
\mbox{span} \{ \varphi_k \}_{k=1}^M$
be a conforming finite element space, e.g., of piecewise linear and
continuous basis functions $\varphi_k$ which are defined with respect
to some admissible decomposition of $\Omega$ into simplicial shape
regular finite elements $\tau_\ell$ of local mesh size $h_\ell$. 

Let $K_h \subset V_h$ be some appropriate approximation of $K$ to
be specified later. Then we consider the Galerkin variational inequality of
\eqref{VI general} to find $u_{\varrho h} \in K_h$ such that
\begin{equation}\label{VI general FEM}
  \varrho \, \langle \nabla u_{\varrho h} ,
  \nabla (v_h - u_{\varrho h}) \rangle_{L^2(\Omega)} +
  \langle u_{\varrho h} , v_h - u_{\varrho h} \rangle_{L^2(\Omega)} \geq
  \langle \overline{u} , v_h - u_{\varrho h} \rangle_{L^2(\Omega)}
\end{equation}
is satisfied for all $v_h \in K_h$, which is obviously equivalent to
\begin{equation}\label{VI general FEM orthogonality}
  \langle \overline{u} - u_{\varrho h} ,
  v_h - u_{\varrho h} \rangle_{L^2(\Omega)} - \varrho \,
  \langle \nabla u_{\varrho h} , \nabla (v_h - u_{\varrho h})
  \rangle_{L^2(\Omega)}
  \leq 0 \quad \mbox{for all} \; v_h \in K_h .
\end{equation}
Following \cite{Falk:1974} we can prove the following
a priori error estimate for the solution $u_{\varrho h} \in K_h$ of
the variational inequality \eqref{VI general FEM orthogonality}.

\begin{lemma}
  For $ u_\varrho \in K$ and $u_{\varrho h} \in K_h$ being the unique
  solutions of the variational inequalities \eqref{VI general}
  and \eqref{VI general FEM}, respectively, there holds the error estimate
  \begin{eqnarray}\label{VI general error}
    && \| u_\varrho - u_{\varrho h} \|^2_{L^2(\Omega)} +
    \varrho \,
    \| \nabla (u_\varrho - u_{\varrho h}) \|_{L^2(\Omega)}^2 \\
    && \hspace*{1mm} \leq \, 8 \, \Big[ \inf\limits_{v_h \in K_h} \Big(
     \| u_\varrho - v_h \|_{L^2(\Omega)}^2 + \varrho \,
     \| \nabla (u_\varrho - v_h) \|_{L^2(\Omega)}^2 \Big) + \,
       \varrho^2 \, \| \Delta u_\varrho \|^2_{L^2(\Omega)} +
       \| u_\varrho - \overline{u} \|_{L^2(\Omega)}^2 \Big] .\nonumber
  \end{eqnarray}
\end{lemma}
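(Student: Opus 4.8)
The plan is to follow the classical Falk argument for first-kind variational inequalities, adapted to the bilinear form $a_\varrho(u,v) := \varrho\,\langle\nabla u,\nabla v\rangle_{L^2(\Omega)} + \langle u,v\rangle_{L^2(\Omega)}$, which is symmetric and coercive with energy norm $\vert\!\vert\!\vert v\vert\!\vert\!\vert^2 := \varrho\,\|\nabla v\|_{L^2(\Omega)}^2 + \|v\|_{L^2(\Omega)}^2$. The quantity to be bounded is exactly $\vert\!\vert\!\vert u_\varrho - u_{\varrho h}\vert\!\vert\!\vert^2$. First I would write, for arbitrary $v_h \in K_h$,
\[
  \vert\!\vert\!\vert u_\varrho - u_{\varrho h}\vert\!\vert\!\vert^2
  = a_\varrho(u_\varrho - u_{\varrho h}, u_\varrho - v_h)
    + a_\varrho(u_\varrho - u_{\varrho h}, v_h - u_{\varrho h}),
\]
and treat the two terms separately.

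For the second term I would use the discrete variational inequality \eqref{VI general FEM}, which states $a_\varrho(u_{\varrho h}, v_h - u_{\varrho h}) \geq \langle\overline u, v_h - u_{\varrho h}\rangle_{L^2(\Omega)}$, to get
\[
  a_\varrho(u_\varrho - u_{\varrho h}, v_h - u_{\varrho h})
  \leq a_\varrho(u_\varrho, v_h - u_{\varrho h}) - \langle\overline u, v_h - u_{\varrho h}\rangle_{L^2(\Omega)}.
\]
Now the key point: since $u_\varrho \in K \cap H^1_\Delta(\Omega)$ by Lemmas \ref{Lemma 2.2}/\ref{Lemma 2.3} (so that $\Delta u_\varrho \in L^2(\Omega)$), integration by parts gives $a_\varrho(u_\varrho, w) = \langle -\varrho\Delta u_\varrho + u_\varrho, w\rangle_{L^2(\Omega)}$ for any $w \in H^1_0(\Omega)$, hence
\[
  a_\varrho(u_\varrho, v_h - u_{\varrho h}) - \langle\overline u, v_h - u_{\varrho h}\rangle_{L^2(\Omega)}
  = \langle -\varrho\Delta u_\varrho + u_\varrho - \overline u,\, v_h - u_{\varrho h}\rangle_{L^2(\Omega)}.
\]
I would then insert $\pm u_\varrho$ in the second slot, writing $v_h - u_{\varrho h} = (v_h - u_\varrho) + (u_\varrho - u_{\varrho h})$, and estimate the resulting two pieces by Cauchy--Schwarz, bounding $\|-\varrho\Delta u_\varrho + u_\varrho - \overline u\|_{L^2(\Omega)} \leq \varrho\|\Delta u_\varrho\|_{L^2(\Omega)} + \|u_\varrho - \overline u\|_{L^2(\Omega)}$, which is precisely where the two terms $\varrho^2\|\Delta u_\varrho\|^2$ and $\|u_\varrho - \overline u\|^2$ in the final estimate originate. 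The first term $a_\varrho(u_\varrho - u_{\varrho h}, u_\varrho - v_h)$ is handled purely by Cauchy--Schwarz in the energy norm, $\leq \vert\!\vert\!\vert u_\varrho - u_{\varrho h}\vert\!\vert\!\vert\,\vert\!\vert\!\vert u_\varrho - v_h\vert\!\vert\!\vert$.

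Collecting everything, I obtain an inequality of the shape $\vert\!\vert\!\vert u_\varrho - u_{\varrho h}\vert\!\vert\!\vert^2 \leq \vert\!\vert\!\vert u_\varrho - u_{\varrho h}\vert\!\vert\!\vert\cdot(\text{sum of norms}) + (\text{terms with }\|u_\varrho - u_{\varrho h}\|_{L^2})$, and I finish by repeated use of Young's inequality $ab \leq \tfrac14 a^2 + b^2$ (or with a suitably chosen weight) to absorb all occurrences of $\vert\!\vert\!\vert u_\varrho - u_{\varrho h}\vert\!\vert\!\vert$ and $\|u_\varrho - u_{\varrho h}\|_{L^2(\Omega)}$ into the left-hand side, then squaring out $(a+b+c)^2 \leq 3(a^2+b^2+c^2)$ where needed; tracking the constants carefully yields the factor $8$. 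Taking the infimum over $v_h \in K_h$ at the end gives \eqref{VI general error}. The main obstacle — and the only genuinely non-routine step — is the integration-by-parts move in the consistency term: it relies crucially on $\Delta u_\varrho \in L^2(\Omega)$, which is exactly the content of Lemmas \ref{Lemma 2.2} and \ref{Lemma 2.3} and is why those lemmas were proved first; without that regularity one would only have a duality pairing and could not extract the $\varrho^2\|\Delta u_\varrho\|_{L^2(\Omega)}^2$ term in $L^2$. Everything else is bookkeeping with Cauchy--Schwarz and Young's inequality to land on the stated constant.
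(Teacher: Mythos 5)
Your proposal is correct and follows essentially the same route as the paper: split the error via an arbitrary $v_h\in K_h$, use the discrete variational inequality to discard the $u_{\varrho h}$-part, integrate by parts (using $\Delta u_\varrho\in L^2(\Omega)$) to produce the consistency term $\langle -\varrho\Delta u_\varrho+u_\varrho-\overline u, v_h-u_{\varrho h}\rangle_{L^2(\Omega)}$, and finish with Cauchy--Schwarz, the triangle inequality and Young's inequality to absorb the error terms and obtain the constant $8$. The only differences are cosmetic (bilinear-form notation and the order in which $\pm u_\varrho$ is inserted versus the paper's splitting of the inner products), so nothing further is needed.
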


\begin{proof}
  For arbitrary $v_h \in K_h$,
  using \eqref{VI general FEM orthogonality} and integration by parts,
  we can write
  \begin{eqnarray*}
    && \| u_\varrho - u_{\varrho h} \|^2_{L^2(\Omega)} +
     \varrho \, \| \nabla (u_\varrho - u_{\varrho h}) \|_{L^2(\Omega)}^2 \\
    && \hspace*{5mm} = \,
       \langle u_\varrho - u_{\varrho h}, u_\varrho - u_{\varrho h}
       \rangle_{L^2(\Omega)} + \varrho \,
       \langle \nabla (u_\varrho - u_{\varrho h}), \nabla
       (u_\varrho - u_{\varrho h}) \rangle_{L^2(\Omega)} \\
  && \hspace*{5mm} = \,
       \langle u_\varrho - u_{\varrho h}, u_\varrho - v_h
       \rangle_{L^2(\Omega)} + \varrho \,
       \langle \nabla (u_\varrho - u_{\varrho h}), \nabla
     (u_\varrho - v_h) \rangle_{L^2(\Omega)} \\
    && \hspace*{10mm} + 
       \langle u_\varrho - u_{\varrho h}, v_h - u_{\varrho h}
       \rangle_{L^2(\Omega)} + \varrho \,
       \langle \nabla (u_\varrho - u_{\varrho h}), \nabla
       (v_h - u_{\varrho h}) \rangle_{L^2(\Omega)} \\    
  && \hspace*{5mm} = \,
       \langle u_\varrho - u_{\varrho h}, u_\varrho - v_h
       \rangle_{L^2(\Omega)} + \varrho \,
       \langle \nabla (u_\varrho - u_{\varrho h}), \nabla
     (u_\varrho - v_h) \rangle_{L^2(\Omega)} \\
    && \hspace*{10mm} + 
       \langle u_\varrho - \overline{u}, v_h - u_{\varrho h}
       \rangle_{L^2(\Omega)} + \varrho \,
       \langle \nabla u_\varrho , \nabla
       (v_h - u_{\varrho h}) \rangle_{L^2(\Omega)} \\
    && \hspace*{15mm}
       + \langle \overline{u} - u_{\varrho h}, v_h - u_{\varrho h}
       \rangle_{L^2(\Omega)}
       - \varrho \,
       \langle \nabla u_{\varrho h} , \nabla
       (v_h - u_{\varrho h}) \rangle_{L^2(\Omega)} \\
  && \hspace*{5mm} \leq \,
       \langle u_\varrho - u_{\varrho h}, u_\varrho - v_h
       \rangle_{L^2(\Omega)} + \varrho \,
       \langle \nabla (u_\varrho - u_{\varrho h}), \nabla
     (u_\varrho - v_h) \rangle_{L^2(\Omega)} \\
    && \hspace*{10mm} + 
       \langle u_\varrho - \overline{u}, v_h - u_{\varrho h}
       \rangle_{L^2(\Omega)} + \varrho \,
       \langle \nabla u_\varrho , \nabla
       (v_h - u_{\varrho h}) \rangle_{L^2(\Omega)} \\
  && \hspace*{5mm} = \,
       \langle u_\varrho - u_{\varrho h}, u_\varrho - v_h
       \rangle_{L^2(\Omega)} + \varrho \,
       \langle \nabla (u_\varrho - u_{\varrho h}), \nabla
     (u_\varrho - v_h) \rangle_{L^2(\Omega)} \\
    && \hspace*{10mm} + 
       \langle - \varrho \Delta u_\varrho
       + u_\varrho - \overline{u}, v_h - u_{\varrho h}
       \rangle_{L^2(\Omega)} \\
  && \hspace*{5mm} \leq \,
     \| u_\varrho - u_{\varrho h} \|_{L^2(\Omega)}
     \| u_\varrho - v_h \|_{L^2(\Omega)} + \varrho \,
     \| \nabla (u_\varrho - u_{\varrho h}) \|_{L^2(\Omega)}
     \| \nabla (u_\varrho - v_h) \|_{L^2(\Omega)}  \\
    && \hspace*{10mm} + 
       \| - \varrho \Delta u_\varrho
       + u_\varrho - \overline{u} \|_{L^2(\Omega)}
       \| v_h - u_{\varrho h} \|_{L^2(\Omega)} \, .
  \end{eqnarray*}
  When using Young's inequality we further have
  \begin{eqnarray*}
    \| u_\varrho - u_{\varrho h} \|^2_{L^2(\Omega)} +
     \varrho \, \| \nabla (u_\varrho - u_{\varrho h}) \|_{L^2(\Omega)}^2 
    & \leq & \frac{1}{4} \, \| u_\varrho - u_{\varrho h} \|_{L^2(\Omega)}^2 +
             \| u_\varrho - v_h \|_{L^2(\Omega)}^2 \\
    && \hspace*{-2cm} 
     + \frac{1}{2} \varrho \,
     \| \nabla (u_\varrho - u_{\varrho h}) \|_{L^2(\Omega)}^2
     + \frac{1}{2} \varrho
     \| \nabla (u_\varrho - v_h) \|_{L^2(\Omega)}^2  \\
    && \hspace*{-2cm} + 
       \| - \varrho \Delta u_\varrho
       + u_\varrho - \overline{u} \|^2_{L^2(\Omega)} +
       \frac{1}{4} \, \| v_h - u_{\varrho h} \|_{L^2(\Omega)}^2 \, ,
  \end{eqnarray*}
  i.e.,
  \begin{eqnarray*}
    && \hspace*{-5mm}
       \frac{3}{4} \, \| u_\varrho - u_{\varrho h} \|^2_{L^2(\Omega)} +
    \frac{1}{2} \, \varrho \,
    \| \nabla (u_\varrho - u_{\varrho h}) \|_{L^2(\Omega)}^2 \, \leq \,
     \| u_\varrho - v_h \|_{L^2(\Omega)}^2 + \frac{1}{2} \, \varrho \,
     \| \nabla (u_\varrho - v_h) \|_{L^2(\Omega)}^2  \\
    && \hspace*{10mm} + \,
       \Big( \varrho \, \| \Delta u_\varrho \|_{L^2(\Omega)} +
       \| u_\varrho - \overline{u} \|_{L^2(\Omega)} \Big)^2  +
       \frac{1}{4} \, \Big(
       \| v_h - u_\varrho \|_{L^2(\Omega)} +
       \| u_\varrho - u_{\varrho h} \|_{L^2(\Omega)} \Big)^2 \\
    && \hspace*{5mm} \leq \,
     \| u_\varrho - v_h \|^2_{L^2(\Omega)} + \frac{1}{2} \, \varrho \,
     \| \nabla (u_\varrho - v_h) \|_{L^2(\Omega)}^2  \\
    && \hspace*{10mm} + \,
       2 \, \varrho^2 \, \| \Delta u_\varrho \|^2_{L^2(\Omega)} +
       2 \, \| u_\varrho - \overline{u} \|_{L^2(\Omega)}^2 +
       \frac{1}{2} \, 
       \| v_h - u_\varrho \|^2_{L^2(\Omega)} +
       \frac{1}{2} \, \| u_\varrho - u_{\varrho h} \|_{L^2(\Omega)}^2,
  \end{eqnarray*}
  and hence,
  \begin{eqnarray*}
    && \frac{1}{4} \, \| u_\varrho - u_{\varrho h} \|^2_{L^2(\Omega)} +
    \frac{1}{2} \, \varrho \,
    \| \nabla (u_\varrho - u_{\varrho h}) \|_{L^2(\Omega)}^2 \\
    && \hspace*{5mm} \leq \, \frac{3}{2} \,
     \| u_\varrho - v_h \|_{L^2(\Omega)}^2 + \frac{1}{2} \, \varrho \,
     \| \nabla (u_\varrho - v_h) \|_{L^2(\Omega)}^2 + \,
       2 \, \varrho^2 \, \| \Delta u_\varrho \|^2_{L^2(\Omega)} +
       2 \, \| u_\varrho - \overline{u} \|_{L^2(\Omega)}^2 ,
  \end{eqnarray*}
  and the assumption follows.
  \end{proof}

\subsection{State constraints}
Let $I_h : C(\overline{\Omega}) \to S_h^1(\Omega)$ be the nodal
interpolation operator. When assuming $g_\pm \in H^1_{\Delta}(\Omega)=H^1_0(\Omega)\cap H^2(\Omega)$ we
then define
\[
  K_{sh} := \Big \{ v_h \in V_h : I_h g_- \leq v_h \leq I_h g_+ \;
  \mbox{in} \; \Omega  \Big \},
\]
and we consider the variational inequality
\eqref{VI general FEM} for $K_h = K_{sh}$.

\begin{theorem}\label{Thm error state}
  Let $u_\varrho \in K_s$ and $u_{\varrho h} \in K_{sh}$ be the unique
  solutions of the variational inequalities \eqref{VI state constraints}
  and \eqref{VI general FEM}, respectively. Assume
  $\overline{u} \in K_s\cap H^1_{\Delta}(\Omega)$ and $g_\pm \in H^1_{\Delta}(\Omega)$.
  When choosing $\varrho=h^2$, then there holds the error estimate
\begin{equation}\label{VI State FEM error}
  \| u_\varrho - u_{\varrho h} \|^2_{L^2(\Omega)} +
  h^2 \, \| \nabla (u_\varrho - u_{\varrho h}) \|_{L^2(\Omega)}^2 \leq
  c \, h^4 \, \Big[ \| \Delta \overline{u} \|^2_{L^2(\Omega)} +
  \|\Delta g_\pm\|^2_{L^2(\Omega)} \Big] .
\end{equation}
\end{theorem}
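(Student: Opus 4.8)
The plan is to feed the abstract a priori estimate \eqref{VI general error} with a well-chosen conforming competitor $v_h\in K_{sh}$, and then to control the three terms on its right-hand side by means of the regularity results from Lemma~\ref{Lemma regularization state constraints} and Lemma~\ref{Lemma 2.2} together with the choice $\varrho=h^2$.

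First I would collect the regularity that is available. Under the hypotheses $\overline u\in K_s\cap H^1_\Delta(\Omega)$ and $g_\pm\in H^1_\Delta(\Omega)$, Lemma~\ref{Lemma 2.2} gives $\Delta u_\varrho\in L^2(\Omega)$ with $\Delta u_\varrho=\Delta g_\pm$ almost everywhere on $\Omega_{s,\pm}$ and $\|\Delta u_\varrho\|_{L^2(\Omega\setminus\Omega_{s,\pm})}\leq\|\Delta\overline u\|_{L^2(\Omega)}$, so that
\[
  \|\Delta u_\varrho\|_{L^2(\Omega)}^2\;\leq\;
  \|\Delta\overline u\|_{L^2(\Omega)}^2+\|\Delta g_+\|_{L^2(\Omega)}^2+\|\Delta g_-\|_{L^2(\Omega)}^2 .
\]
Since $\Omega$ is convex or sufficiently regular, $\|\Delta\cdot\|_{L^2(\Omega)}$ is an equivalent norm on $H^1_\Delta(\Omega)=H^1_0(\Omega)\cap H^2(\Omega)$, whence $u_\varrho\in H^2(\Omega)$ with $|u_\varrho|_{H^2(\Omega)}\leq c\,\|\Delta u_\varrho\|_{L^2(\Omega)}$; for $n\leq 3$ the embedding $H^2(\Omega)\hookrightarrow C(\overline\Omega)$ then makes $I_h u_\varrho$ and $I_h g_\pm$ well defined.

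Next I would take $v_h=I_h u_\varrho$ in \eqref{VI general error}. The crucial point is admissibility, $I_h u_\varrho\in K_{sh}$: because the nodal interpolation of piecewise linear functions is monotone and reproduces affine functions on each simplex, the pointwise bounds $g_-\leq u_\varrho\leq g_+$ in $\Omega$ carry over to $I_h g_-\leq I_h u_\varrho\leq I_h g_+$ in $\Omega$. Standard interpolation error estimates then yield $\|u_\varrho-I_h u_\varrho\|_{L^2(\Omega)}\leq c\,h^2\,|u_\varrho|_{H^2(\Omega)}$ and $\|\nabla(u_\varrho-I_h u_\varrho)\|_{L^2(\Omega)}\leq c\,h\,|u_\varrho|_{H^2(\Omega)}$, so that with $\varrho=h^2$ the infimum term in \eqref{VI general error} is bounded by $c\,(h^4+\varrho h^2)\,|u_\varrho|_{H^2(\Omega)}^2=c\,h^4\,|u_\varrho|_{H^2(\Omega)}^2$.

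Finally I would bound the remaining two terms of \eqref{VI general error}: with $\varrho=h^2$ one has $\varrho^2\,\|\Delta u_\varrho\|_{L^2(\Omega)}^2=h^4\,\|\Delta u_\varrho\|_{L^2(\Omega)}^2$, and by the regularization estimate \eqref{regularization state constraints H Su}, $\|u_\varrho-\overline u\|_{L^2(\Omega)}^2\leq\varrho^2\,\|\Delta\overline u\|_{L^2(\Omega)}^2=h^4\,\|\Delta\overline u\|_{L^2(\Omega)}^2$. Substituting the bound on $\|\Delta u_\varrho\|_{L^2(\Omega)}^2$ and on $|u_\varrho|_{H^2(\Omega)}$ obtained above, all three contributions are of the form $c\,h^4\bigl[\|\Delta\overline u\|_{L^2(\Omega)}^2+\|\Delta g_+\|_{L^2(\Omega)}^2+\|\Delta g_-\|_{L^2(\Omega)}^2\bigr]$, which is exactly \eqref{VI State FEM error}. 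I expect the only genuinely delicate step to be the verification that $I_h u_\varrho\in K_{sh}$, i.e.\ that nodal interpolation respects the two-sided obstacle; this rests on the continuity of $u_\varrho$ and $g_\pm$ (from $H^2$-regularity and Sobolev embedding in dimension $\leq 3$) and on the monotonicity of $P_1$ nodal interpolation, while the rest is the abstract estimate plus routine interpolation bounds and the already established regularity.
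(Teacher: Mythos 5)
Your proposal is correct and follows essentially the same route as the paper's proof: bound $|u_\varrho|_{H^2(\Omega)}$ via Lemma \ref{Lemma 2.2} and the norm equivalence, insert the nodal interpolant $I_h u_\varrho\in K_{sh}$ as competitor in \eqref{VI general error}, use standard interpolation estimates together with \eqref{regularization state constraints H Su}, and set $\varrho=h^2$. Your explicit verification that $I_h u_\varrho\in K_{sh}$ (ordering of nodal values plus nonnegativity of the $P_1$ basis) is a detail the paper leaves implicit, and it is a welcome addition.
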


\begin{proof}
  Due to Lemma \ref{Lemma 2.2} we have
  \[
    \| \Delta u_\varrho \|^2_{L^2(\Omega)} =
    \| \Delta g_\pm \|^2_{L^2(\Omega_\pm)} +
    \| \Delta u_\varrho \|^2_{L^2(\Omega \backslash \Omega_\pm)}, %\leq
    %\| \Delta \overline{u} \|_{L^2(\Omega)} + |g_\pm|^2_{H^2(\Omega)} ,
  \]
  and since $\| \Delta u \|_{L^2(\Omega)}$ defines an equivalent
  norm in $H^1_0(\Omega) \cap H^2(\Omega)$,
  \[
    |u_\varrho|^2_{H^2(\Omega)} \leq c \,
    \| \Delta u_\varrho \|^2_{L^2(\Omega)}
    \leq c \, \Big[
    \| \Delta \overline{u} \|_{L^2(\Omega)} +  \|\Delta g_\pm\|^2_{L^2(\Omega)} \Big]
  \]
  follows. Hence we can consider
  the nodal interpolation $I_h u_\varrho \in K_{sh}$ and we can use
  standard interpolation error estimates to conclude
  \[
     \| u_\varrho - I_hu_\varrho \|^2_{L^2(\Omega)} + \varrho \,
     \| \nabla (u_\varrho - I_h u_\varrho) \|_{L^2(\Omega)}^2 \leq
     c \, \Big( h^4 + \varrho \, h^2 \Big) \,
     |u_\varrho|^2_{H^2(\Omega)} .
   \]
   With this and using \eqref{regularization state constraints H Su}
   we can write the general error estimate \eqref{VI general error} as
  \begin{eqnarray*}
    && \| u_\varrho - u_{\varrho h} \|^2_{L^2(\Omega)} +
    \varrho \, \| \nabla (u_\varrho - u_{\varrho h}) \|_{L^2(\Omega)}^2 \\
    && \hspace*{2cm} \leq \, 8 \, \Big[
       c \, \Big( h^4 + \varrho \, h^2 \Big) \,
       |u_\varrho|^2_{H^2(\Omega)}
       + 2 \, \varrho^2 \, \Big( \| \Delta \overline{u} \|^2_{L^2(\Omega)}
       + \|\Delta g_\pm\|^2_{L^2(\Omega)} \Big)  
       \Big] .
     \end{eqnarray*}
     The assertion finally follows when choosing $\varrho = h^2$.
     \end{proof}

 \begin{corollary}
   Assume $\overline{u} \in K_s \cap H^r(\Omega)$ for some $r \in (1,2]$,
   or $\overline{u} \in H^r_0(\Omega)$ for some $r \in [0,1]$.
   In the latter case we also assume
   $g_-(x) \leq \overline{u}(x) \leq g_+(x)$ for almost
   all $x \in \Omega$, where $g_\pm \in H^r(\Omega)$, $r \in [0,2]$.
   Then there holds the error estimate
   \begin{equation}\label{Error state L2 Hs}
     \| u_{\varrho h} - \overline{u} \|_{L^2(\Omega)} \leq c \, h^r \,
     \sqrt{ \| \overline{u} \|^2_{H^r(\Omega)} + \| g_\pm\|^2_{H^r(\Omega)}}.
   \end{equation}
\end{corollary}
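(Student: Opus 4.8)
The plan is to interpolate between the two regularity regimes already covered. For $r \in (1,2]$ the estimate essentially follows from Theorem~\ref{Thm error state} by a triangle inequality together with the regularization error estimate: writing $\| u_{\varrho h} - \overline{u} \|_{L^2(\Omega)} \le \| u_{\varrho h} - u_\varrho \|_{L^2(\Omega)} + \| u_\varrho - \overline{u} \|_{L^2(\Omega)}$, the first term is controlled by \eqref{VI State FEM error} (with $\varrho = h^2$), giving a bound of order $h^2 \le h^r$ times $\sqrt{\| \Delta\overline{u}\|^2_{L^2(\Omega)} + \|\Delta g_\pm\|^2_{L^2(\Omega)}}$, and the second term by \eqref{regularization state constraints H Su}, i.e.\ $\varrho\,\|\Delta\overline{u}\|_{L^2(\Omega)} = h^2\,\|\Delta\overline{u}\|_{L^2(\Omega)} \le h^r\,\|\Delta\overline{u}\|_{L^2(\Omega)}$ since $h$ may be assumed bounded. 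One absorbs $\|\Delta\overline u\|_{L^2(\Omega)}$ and $\|\Delta g_\pm\|_{L^2(\Omega)}$ into $\|\overline u\|_{H^r(\Omega)}$ and $\|g_\pm\|_{H^r(\Omega)}$ for $r=2$, and for $r\in(1,2)$ one first notes $h^2 \le h^r$ and that $H^r$-regularity still suffices to run the argument with the weaker interpolation order $h^r$ in place of $h^2$ — this requires redoing the interpolation step in the proof of Theorem~\ref{Thm error state} with $|u_\varrho|_{H^r(\Omega)}$ rather than $|u_\varrho|_{H^2(\Omega)}$, which is routine once one knows $u_\varrho \in H^r(\Omega)$.

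For $r \in [0,1]$ I would argue by the standard Falk-type comparison directly, without passing through $\Delta u_\varrho$. Here the point is that $\overline{u} \in H^r_0(\Omega)$ with $g_- \le \overline{u} \le g_+$ is itself (close to) an admissible competitor, so one can take $v_h$ in \eqref{VI general error} to be a suitable quasi-interpolant (Clément or Scott–Zhang) of $\overline{u}$ that respects the box constraint $I_h g_- \le v_h \le I_h g_+$; since $g_\pm \in H^r(\Omega)$ the interpolation errors $\|\overline u - v_h\|_{L^2(\Omega)}$ and $\|g_\pm - I_h g_\pm\|$ are of order $h^r$ in the appropriate norms. Then in \eqref{VI general error} the term $\|u_\varrho - v_h\|_{L^2}$ is split via $\overline u$ as $\|u_\varrho-\overline u\|_{L^2} + \|\overline u - v_h\|_{L^2}$, the first piece handled by the regularization estimate \eqref{regularization state constraints H S} (order $\sqrt\varrho\,\|\nabla\overline u\|$, but only for $r\ge$ the relevant threshold — for $r<1$ one instead interpolates between \eqref{regularization state constraints H H} and \eqref{regularization state constraints H S} to get order $\varrho^{r/2}\|\overline u\|_{H^r}$), the $\varrho\|\nabla(u_\varrho - v_h)\|$ term by the same splitting plus \eqref{regularization state constraints S S}, and the $\varrho^2\|\Delta u_\varrho\|^2$ and $\|u_\varrho-\overline u\|^2$ terms likewise by the regularization lemma. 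Collecting powers and choosing $\varrho = h^2$ yields $h^{2r}$ on the right, hence the claimed $h^r$ after taking square roots.

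The main obstacle is the construction of the constraint-preserving interpolant in the low-regularity range $r\in[0,1]$: the nodal interpolant $I_h$ is not defined on $H^r(\Omega)$ for $r < n/2$, so one genuinely needs a quasi-interpolation operator, and one must verify both that it maps into $K_{sh}$ (i.e.\ stays between $I_h g_-$ and $I_h g_+$ — which uses $g_- < g_+$ with some uniform gap, or a more careful local argument) and that it retains $H^r$ approximation order with constants depending only on shape regularity. A secondary technical point is the fractional-order interpolation inequality needed to get $\varrho^{r/2}$ and $h^r$ rates from the integer-order estimates \eqref{regularization state constraints H H}, \eqref{regularization state constraints H S}, \eqref{regularization error estimate}; this is standard by the $K$-method of interpolation spaces, exactly as invoked in \cite{NeumuellerSteinbach:2021}, so I would cite rather than reprove it. Everything else is bookkeeping: triangle inequalities, Young's inequality, and the observation that $h^4 \le h^{2r}$ and $\varrho^2 = h^4 \le h^{2r}$ for $r \le 2$ on a bounded mesh family.
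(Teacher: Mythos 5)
Your plan diverges from the paper's proof and has a genuine gap in both regularity regimes. The paper does not attempt to run the Falk-type estimate \eqref{VI general error} at intermediate regularity at all: it proves exactly two endpoint estimates --- the trivial bound $\|u_{\varrho h}-\overline u\|_{L^2(\Omega)}\le\|\overline u\|_{L^2(\Omega)}$ obtained by testing \eqref{VI general FEM} with $v_h=0\in K_{sh}$ (the case $r=0$), and the bound $c\,h^2\sqrt{\|\overline u\|^2_{H^2(\Omega)}+\|g_\pm\|^2_{H^2(\Omega)}}$ obtained from \eqref{VI State FEM error} together with \eqref{regularization state constraints H Su} via the triangle inequality (the case $r=2$) --- and then concludes for all intermediate $r$ by a space interpolation argument. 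Your treatment of $r=2$ coincides with the second endpoint, but the way you propose to fill in $r\in(1,2)$ and $r\in[0,1]$ does not go through as stated.

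The concrete obstruction is the consistency term $\varrho^2\,\|\Delta u_\varrho\|^2_{L^2(\Omega)}$ (equivalently the residual $\|-\varrho\Delta u_\varrho+u_\varrho-\overline u\|_{L^2(\Omega)}$) in \eqref{VI general error}. You claim this is handled ``likewise by the regularization lemma'', but Lemma \ref{Lemma regularization state constraints} only controls $\|u_\varrho-\overline u\|_{L^2(\Omega)}$ and $\|\nabla(u_\varrho-\overline u)\|_{L^2(\Omega)}$; the $L^2$ control of $\Delta u_\varrho$ comes from Lemma \ref{Lemma 2.2}, which requires $\overline u\in K_s\cap H^1_{\Delta}(\Omega)$ and $g_\pm\in H^1_{\Delta}(\Omega)$. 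For $\overline u\in H^r_0(\Omega)$ with $r\le 1$, or $g_\pm\in H^r(\Omega)$ with $r<2$, $\Delta u_\varrho$ need not lie in $L^2(\Omega)$ at all, and no $\varrho$-uniform bound is available, so the Falk estimate cannot even be invoked in your low-regularity argument. Similarly, ``redoing the interpolation step with $|u_\varrho|_{H^r(\Omega)}$'' for $r\in(1,2)$ presupposes a $\varrho$-uniform fractional regularity estimate for the solution of the two-obstacle problem, which is nowhere established in the paper and is not routine. Finally, the constraint-preserving quasi-interpolant that you identify as the main obstacle is simply not needed: the paper's proof uses only the competitor $v_h=0$ and, at the $H^2$ endpoint, the nodal interpolant of $u_\varrho$ (inside the proof of Theorem \ref{Thm error state}); all intermediate rates $h^r$ are then recovered by interpolating the two endpoint estimates rather than by a direct discrete analysis at fractional regularity.
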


\begin{proof}
  When considering the variational inequality
  \eqref{VI general FEM} for $v_h =0$, this gives
  \[
    \varrho \, \langle \nabla u_{\varrho h} ,
    \nabla u_{\varrho h} \rangle_{L^2(\Omega)} +
    \langle u_{\varrho h} - \overline{u} ,
    u_{\varrho h} - \overline{u} \rangle_{L^2(\Omega)} \leq
    \langle \overline{u} - u_{\varrho h} ,
    \overline{u} \rangle_{L^2(\Omega)} ,
  \]
  from which we derive the trivial error estimate
  \[
    \| u_{\varrho h} - \overline{u} \|_{L^2(\Omega)} \leq
    \| \overline{u} \|_{L^2(\Omega)} \leq
    \sqrt{\| \overline{u} \|^2_{L^2(\Omega)} +
      \| g_\pm \|^2_{L^2(\Omega)}} .
  \]
  On the other hand, \eqref{VI State FEM error} and
  \eqref{regularization state constraints H Su} imply, recall
  $\varrho = h^2$,
  \[
    \| u_{\varrho h} - \overline{u} \|_{L^2(\Omega)} \leq
    \| u_{\varrho h} - u_\varrho \|_{L^2(\Omega)} +
    \| u_\varrho - \overline{u} \|_{L^2(\Omega)} \leq
    c \, h^2 \, \sqrt{\| \overline{u} \|^2_{H^2(\Omega)} +
      \| g_\pm \|^2_{H^2(\Omega)}}.
  \]
  Now the assertion follows from a space interpolation argument.
  \end{proof}
   
\noindent
Using the isomorphism $v_h \in K_{sh} \leftrightarrow
\underline{v} \in {\mathbb{R}}^M$ we can write the variational
inequality \eqref{VI general FEM} as: Find $\underline{u} \in {\mathbb{R}}^M
\leftrightarrow u_{\varrho h} \in K_{sh}$ such that
\begin{equation}\label{state LGS}
  \varrho \, ( K_h \underline{u} , \underline{v} - \underline{u} ) +
  ( M_h \underline{u} , \underline{v} - \underline{u} ) \geq
  ( \underline{\overline{u}} , \underline{v} - \underline{u} )
\end{equation}
is satisfied for all $\underline{v} \in {\mathbb{R}}^M
\leftrightarrow v_h \in K_{sh}$. Here, $M_h$ and $K_h$ are the
standard finite element mass and stiffness matrices, defined by
\[
  K_h[j,k] = \int_\Omega \nabla \varphi_k(x) \cdot \nabla \varphi_j(x) \, dx,
  \quad
  M_h[j,k] = \int_\Omega \varphi_k(x) \varphi_j(x) \, dx, \quad
  j,k = 1,\ldots,M,
\]
and $\underline{\overline{u}}$ is the load vector with the entries
\[
  \overline{u}_j = \int_\Omega \overline{u}(x) \, \varphi_j(x) \, dx \quad
  \mbox{for} \; j=1,\ldots,M.
\]
As in the continuous case we define
$\underline{\lambda} := M_h \underline{u} + \varrho K_h \underline{u} -
\underline{\overline{u}} \in {\mathbb{R}}^M$.
Further, let the index set of the active nodes be denoted by 
$D_\pm := \{ k=1,\ldots,M : u_k = g_{\pm,k} \}$.
Then we conclude the discrete complementarity conditions
\begin{equation}\label{VI state discrete complementarity}
  \lambda_k = 0, \; g_{-,k} < u_k < g_{+,k} \; \mbox{for} \;
  k \not\in D_\pm, \; \lambda_k \leq 0 \; \mbox{for} \; k \in D_+, \;
  \lambda_k \geq 0 \; \mbox{for} \; k \in D_- ,
\end{equation}
which are equivalent to
\[
  \lambda_k = \min \{ 0, \lambda_k + c (g_{+,k}-u_k) \} +
  \max \{ 0, \lambda_k + c (g_{-,k}-u_k) \} , \quad c > 0.
\]
Hence we have to solve a system
$\underline{F}(\underline{u},\underline{\lambda})=\underline{0}$
of (non)linear equations
\begin{equation}\label{nonlinear 1 state}
  F_1(\underline{u},\underline{\lambda}) =
  M_h \underline{u} + \varrho K_h \underline{u} - \underline{\lambda} -
  \underline{\overline{u}} = \underline{0},
\end{equation}
\begin{equation}\label{nonlinear 2 state}
  F_2(\underline{u} , \underline{\lambda}) =
  \underline{\lambda} -
  \min \{ 0 , \underline{\lambda} + c (\underline{g}_+ - \underline{u}) \} -
  \max \{ 0 , \underline{\lambda} + c (\underline{g}_--\underline{u}) \},
\end{equation}
where the latter have to be considered componentwise. This
will be discussed in Section~\ref{Section:Newton}.

\subsection{Control constraints}
In the case of control constraints we consider the variational
inequality \eqref{VI general FEM} for $K_h=K_{ch}$ where
\[
  K_{ch} := \Big \{ u_h \in V_h : \langle f_- , v_h \rangle_{L^2(\Omega)}
  \leq \langle \nabla u_h , \nabla v_h \rangle_{L^2(\Omega)} \leq
  \langle f_+ , v_h \rangle_{L^2(\Omega)} \; \forall v_h \in V_h , \;
  v_h \geq 0 \Big \} \, .
\]

\begin{theorem}\label{Thm error control}
  Let $u_\varrho \in K_c$ and $u_{\varrho h} \in K_{ch}$ be the unique
  solutions of the variational inequalities \eqref{VI control constraints u}
  and \eqref{VI general FEM}, respectively. Assume
  $\overline{u} \in K_c\cap H^1_{\Delta}(\Omega)$ and $f_\pm \in L^2(\Omega)$.
  When choosing $\varrho=h^2$, then there holds the error estimate
\begin{equation}\label{VI Control FEM error}
  \| u_\varrho - u_{\varrho h} \|^2_{L^2(\Omega)} +
  h^2 \, \| \nabla (u_\varrho - u_{\varrho h}) \|_{L^2(\Omega)}^2 \leq
  c \, h^4 \, \Big[ \| \Delta \overline{u} \|^2_{L^2(\Omega)} +
  \|f_\pm\|^2_{L^2(\Omega)} \Big] .
\end{equation}
\end{theorem}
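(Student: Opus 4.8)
The plan is to mirror the proof of Theorem~\ref{Thm error state} step by step, the only structural change being that the nodal interpolant $I_h u_\varrho$ — which is tailored to the pointwise box constraints defining $K_{sh}$ — has to be replaced by an approximant compatible with the \emph{gradient pairing} defining $K_{ch}$. The natural candidate is the Galerkin (Ritz) projection.

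First I would extract an $H^2$-bound on $u_\varrho$. By Lemma~\ref{Lemma 2.3} we have $z_\varrho=-\Delta u_\varrho\in L^2(\Omega)$, and from its proof $-\Delta u_\varrho=f_\pm$ almost everywhere on $\Omega_{c,\pm}$ together with $\|\Delta u_\varrho\|_{L^2(\Omega\backslash\Omega_{c,\pm})}\le\|\Delta\overline{u}\|_{L^2(\Omega)}$, whence
\[
  \|\Delta u_\varrho\|^2_{L^2(\Omega)}\le\|f_\pm\|^2_{L^2(\Omega)}+\|\Delta\overline{u}\|^2_{L^2(\Omega)}.
\]
Under the standing assumption on $\Omega$ (equivalence of $\|\Delta\cdot\|_{L^2(\Omega)}$ with the $H^2$-seminorm on $H^1_\Delta(\Omega)$) this gives $|u_\varrho|^2_{H^2(\Omega)}\le c\,[\|\Delta\overline{u}\|^2_{L^2(\Omega)}+\|f_\pm\|^2_{L^2(\Omega)}]$, in particular $u_\varrho\in H^2(\Omega)$.

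Next I would use, as discrete comparison function in the general a priori estimate \eqref{VI general error}, the projection $R_h u_\varrho\in V_h$ defined by $\langle\nabla R_h u_\varrho,\nabla v_h\rangle_{L^2(\Omega)}=\langle\nabla u_\varrho,\nabla v_h\rangle_{L^2(\Omega)}$ for all $v_h\in V_h$. The only genuinely new point, compared to Theorem~\ref{Thm error state}, is the verification that $R_h u_\varrho\in K_{ch}$: for every $v_h\in V_h$ with $v_h\ge 0$, membership $u_\varrho\in K_c$ yields $\langle f_-,v_h\rangle_{L^2(\Omega)}\le\langle\nabla u_\varrho,\nabla v_h\rangle_{L^2(\Omega)}=\langle\nabla R_h u_\varrho,\nabla v_h\rangle_{L^2(\Omega)}\le\langle f_+,v_h\rangle_{L^2(\Omega)}$, which is exactly the constraint defining $K_{ch}$. (Note that $K_{ch}\not\subset K_c$ in general, but the Falk-type estimate \eqref{VI general error} only requires one admissible discrete function.) I expect this verification to be the main obstacle; once the right projection is identified, it is immediate.

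Finally I would insert standard error estimates for this projection, namely $\|u_\varrho-R_h u_\varrho\|_{L^2(\Omega)}\le c\,h^2\,|u_\varrho|_{H^2(\Omega)}$ (Aubin--Nitsche duality, using the $H^2$-regularity of $\Omega$) and $\|\nabla(u_\varrho-R_h u_\varrho)\|_{L^2(\Omega)}\le c\,h\,|u_\varrho|_{H^2(\Omega)}$, so that
\[
  \|u_\varrho-R_h u_\varrho\|^2_{L^2(\Omega)}+\varrho\,\|\nabla(u_\varrho-R_h u_\varrho)\|^2_{L^2(\Omega)}\le c\,(h^4+\varrho\,h^2)\,|u_\varrho|^2_{H^2(\Omega)}.
\]
Combining this with the $H^2$-bound from the first step, bounding $\varrho^2\|\Delta u_\varrho\|^2_{L^2(\Omega)}$ in the same way, and using the regularization estimate \eqref{regularization state constraints H Su} (valid here since $\overline{u}\in K_c$) to control $\|u_\varrho-\overline{u}\|^2_{L^2(\Omega)}\le\varrho^2\,\|\Delta\overline{u}\|^2_{L^2(\Omega)}$, all three terms on the right-hand side of \eqref{VI general error} become bounded by $c\,(h^4+\varrho\,h^2+\varrho^2)\,[\|\Delta\overline{u}\|^2_{L^2(\Omega)}+\|f_\pm\|^2_{L^2(\Omega)}]$. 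Setting $\varrho=h^2$ then yields \eqref{VI Control FEM error}; the remainder of the argument is a routine repetition of the computation carried out in Theorem~\ref{Thm error state}.
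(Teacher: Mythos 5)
Your proposal is correct and follows essentially the same route as the paper: the paper also takes the $H^2$-bound on $u_\varrho$ from Lemma~\ref{Lemma 2.3}, uses the Ritz projection (denoted $P_h u_\varrho$ there) as the admissible comparison function in \eqref{VI general error} — noting, as you verify explicitly, that it lies in $K_{ch}$ by construction since the gradient pairing with nonnegative $v_h\in V_h$ is preserved — and concludes with the standard projection error estimates (Nitsche trick), the regularization bound \eqref{regularization state constraints H Su}, and the choice $\varrho=h^2$, exactly as you do.
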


\begin{proof}
  Due to Lemma \ref{Lemma 2.3} we have
  \[
    \| \Delta u_\varrho \|^2_{L^2(\Omega)} =
    \| f_\pm \|^2_{L^2(\Omega_\pm)} +
    \| \Delta u_\varrho \|^2_{L^2(\Omega \backslash \Omega_\pm)} \leq
    \| \Delta \overline{u} \|_{L^2(\Omega)}^2 + \|f_\pm\|^2_{L^2(\Omega)} ,
  \]
  and
  \[
    |u_\varrho|^2_{H^2(\Omega)} \leq c \,
    \| \Delta u_\varrho \|^2_{L^2(\Omega)}
    \leq c \, \Big[
    \| \Delta \overline{u} \|_{L^2(\Omega)}^2 + \|f_\pm\|^2_{L^2(\Omega)} \Big]
  \]
  follows. For $u_\varrho \in K_c$ we define $P_h u_\varrho \in V_h$
  as the unique solution of the variational formulation
  \[
    \langle \nabla P_h u_\varrho , \nabla v_h \rangle_{L^2(\Omega)}
    =
    \langle \nabla u_\varrho , \nabla v_h \rangle_{L^2(\Omega)}
    \quad \mbox{for all} \; v_h \in V_h.
  \]
  By construction we have $P_h u_\varrho \in K_{ch}$, and using standard
  finite element error estimates including the Nitsche trick we conclude
  \[
    \| u_\varrho - P_h u_\varrho \|^2_{L^2(\Omega)} +
    \varrho \, \| \nabla (u_\varrho - P_h u_\varrho) \|^2_{L^2(\Omega)}
    \leq c \, \Big( h^4 + \varrho \, h^2 \Big) \,
    |u_\varrho|^2_{H^2(\Omega)} .
  \]
  The assertion now follows as in the case of state constraints, we
  skip the details.
  \end{proof}

\noindent
As in the case of state constraints we can also derive error estimates
for less regular target functions.

Using the isomorphism $v_h \in K_{ch} \leftrightarrow
\underline{v} \in {\mathbb{R}}^M$ we can write the variational
inequality \eqref{VI general FEM}
as: Find $\underline{u} \in {\mathbb{R}}^M \leftrightarrow
u_{\varrho h} \in K_{ch}$ such that
\[
  ( (M_h + \varrho K_h ) \underline{u} - \underline{\overline{u}},
  \underline{v} - \underline{u} ) \geq 0 \quad
  \mbox{for all} \; \underline{v} \in {\mathbb{R}}^M \leftrightarrow
  v_h \in K_{ch} \, .
\]
The control constraints $v_h \in K_{ch}$ are equivalent to
\[
  f_{-,i} \leq (K_h \underline{v})_i \leq f_{+,i} \quad
  \mbox{for all} \; i=1,\ldots,M \, .
\]
On the other hand, the discrete variational inequality can be written as
\[
  ( \underline{w} ,   K_h \underline{v} - K_h \underline{u} ) \geq 0 ,
  \quad \mbox{where} \quad
\underline{w} :=
(K_h^{-1} M_h + \varrho I ) \underline{u} - K_h^{-1}
  \underline{\overline{u}} .
\]
We introduce the discrete active sets
$I_\pm := \{ i \in {\mathbb{R}}^M : (K_h \underline{u})_i = f_{\pm,i} \}$
and conclude
\[
  \sum\limits_{i \in I_+} w_i
  \underbrace{[(K_h \underline{v})_i - f_{+,i}]}_{\leq 0}
+
\sum\limits_{i \in I_-} w_i
\underbrace{[(K_h \underline{v})_i - f_{-,i}]}_{\geq 0}
+
\sum\limits_{i \in I \backslash I_\pm} w_i
[(K_h\underline{v})_i - (K_h \underline{u})_i] \geq 0 \, .
\]
Let us define
$g_i = f_{+,i}$ for $i \in I_+$, 
$g_i = f_{-,i}$ for $i \in I_-$, and
$g_i = (K_h \underline{u})_i$ for $i \in I \backslash I_\pm$.
For some $j \in I_+$ we set
$g_j = f_{+,j} - \alpha$ with $0 < \alpha < f_{+,j} - f_{-,j}$, and
we solve $K_h \underline{v} = \underline{g}$.
By construction we obtain
$- w_j  \alpha \geq 0$, i.e., $w_j \leq 0$ for $j \in I_+$,
and in the same way we conclude $w_j \geq 0$ for $j \in I_-$ as
well as $w_j=0$ for $j \in I \backslash I_\pm$.
Hence we have the discrete complementarity conditions
\[
  w_j = 0 \; : \; f_{-,j} < (K_h \underline{u})_j < f_{+,j},
  \, j \not\in I_\pm, \;
  w_j \leq 0 \; : \; (K_h\underline{u})_j = f_{+,j},\;
    w_j \geq 0 \; : \; (K_h\underline{u})_j = f_{-,j}.
\]
As in the case of state constraints we have to solve a system of
(non)linear equations,
\begin{equation}\label{nonlinear 1 control}
F_1(\underline{u},\underline{w}) = 
M_h\underline{u} + \varrho K_h  \underline{u} - K_h \underline{w}
- \underline{\overline{u}} = \underline{0},
\end{equation}
\begin{equation}\label{nonlinear 2 control}
  F_2(\underline{u} , \underline{w}) =
  \underline{w} -
  \min \{ 0 , \underline{w} + c (\underline{f}_+ - K_h \underline{u}) \} -
  \max \{ 0 , \underline{w} + c (\underline{f}_-- K_h \underline{u}) \},
  \quad c > 0 .
\end{equation}

\subsection{Finite element approximation of the control}
When the state $u_\varrho$, i.e., its finite element approximation
$u_{\varrho h}$, is known it remains to find the related control
$z_\varrho = -\Delta u_\varrho  \in H^{-1}(\Omega)$, i.e., an appropriate
finite element approximation of
$\widetilde{z}_\varrho = - \Delta u_{\varrho h} \in H^{-1}(\Omega)$.
To do so,
we define $A : H^1_0(\Omega) \to H^{-1}(\Omega)$ satisfying
\[
  \langle A u , v \rangle_\Omega =
  \langle \nabla u , \nabla v \rangle_{L^2(\Omega)} \quad
  \mbox{for all} \; u,v \in H^1_0(\Omega) ,
\]
and we can compute $\widetilde{z}_\varrho \in H^{-1}(\Omega)$ as
unique solution of the variational formulation
\[
  \langle A^{-1} \widetilde{z}_\varrho , \psi \rangle_\Omega =
  \langle u_{\varrho h} , \psi \rangle_{L^2(\Omega)} \quad
  \mbox{for all} \; \psi \in H^{-1}(\Omega).
\]
In addition to the finite element space $V_h \subset H^1_0(\Omega)$ of
piecewise linear and continuous basis functions $\varphi_k$ we now define the
ansatz space $Z_H = S_H^0(\Omega) = \mbox{span} \{ \psi_\ell \}_{\ell=1}^N$
of piecewise constant basis functions $\psi_\ell$ which are defined with
respect to some mesh of mesh size $H \simeq h$.
Then we can find the finite element
approximation $\widetilde{z}_{\varrho H} \in Z_H$ as unique solution of the
variational formulation
\begin{equation}\label{FEM control}
  \langle A^{-1} \widetilde{z}_{\varrho H} , \psi_H \rangle_\Omega =
  \langle u_{\varrho h} , \psi_H \rangle_{L^2(\Omega)} \quad
  \mbox{for all} \; \psi_H \in Z_H .
\end{equation}
In addition, let $z_{\varrho H} \in Z_H$ be the solution of the variational
formulation
\[
  \langle A^{-1} z_{\varrho H} , \psi_H \rangle_\Omega =
  \langle u_\varrho  , \psi_H \rangle_{L^2(\Omega)} =
  \langle A^{-1} z_\varrho , \psi_H \rangle_\Omega \quad
  \mbox{for all} \; \psi_H \in Z_H .
\]
When using standard arguments we conclude Cea's lemma
\[
  \| z_\varrho - z_{\varrho H} \|_{H^{-1}(\Omega)} \leq
  \inf\limits_{\psi_H \in Z_H} \| z_\varrho - \psi_H \|_{H^{-1}(\Omega)},
\]
and the error estimate
\[
  \| z_\varrho - z_{\varrho H} \|_{H^{-1}(\Omega)}
  \leq c \, H \, \| z_\varrho \|_{L^2(\Omega)} =
  c \, H \, \| \Delta u_\varrho \|_{L^2(\Omega)}.
\]
In the case of state constraints we further have,
as in the proof of Lemma \ref{Lemma 2.2},
\begin{eqnarray*}
  \| \Delta u_\varrho \|_{L^2(\Omega)}^2
  & = & \| \Delta g_\pm \|_{L^2(\Omega_\pm)}^2 +
        \| \Delta u_\varrho \|_{L^2(\Omega \backslash \Omega_\pm)}^2 \, = \,
        \| \Delta g_\pm \|_{L^2(\Omega_\pm)}^2 + \frac{1}{\varrho^2} \,
        \| \overline{u} - u_\varrho \|^2_{L^2(\Omega\backslash \Omega_\pm)} \\
  & \leq & \| \Delta g_\pm \|_{L^2(\Omega)}^2 + \varrho^{s-2} \,
           \| \overline{u} \|^2_{H^s(\Omega)} \, \leq \,
           \Big( \| \Delta g_\pm \|_{L^2(\Omega)} + \varrho^{s/2-1} \,
           \| \overline{u} \|_{H^s(\Omega)} \Big)^2,
\end{eqnarray*}
i.e., recall $H \simeq h$ and $\varrho = h^2$,
\[
  \| z_\varrho - z_{\varrho H} \|_{H^{-1}(\Omega)}
  \leq c \, h^{s-1} \,
  \Big( \| g_\pm \|_{H^2(\Omega)} + 
  \| \overline{u} \|_{H^s(\Omega)} \Big) .
\]
Note that in the case of control constraints we obtain a similar result,
when assuming $f_\pm \in L^2(\Omega)$ instead of
$g_\pm \in H^1_{\Delta}(\Omega)$.
In any case, we have the perturbed Galerkin orthogonality
\[
  \langle A^{-1} (z_{\varrho H} - \widetilde{z}_{\varrho H}), \psi_H
  \rangle_\Omega = \langle u_\varrho - u_{\varrho h} , \psi_H
  \rangle_{L^2(\Omega)}
  \quad \mbox{for all} \; \psi_H \in Z_H,
\]
from which we conclude
\begin{eqnarray*}
  && \| z_{\varrho H} - \widetilde{z}_{\varrho H} \|^2_{H^{-1}(\Omega)}
  \, = \,\langle A^{-1} (z_{\varrho H} - \widetilde{z}_{\varrho H}),
        z_{\varrho H} - \widetilde{z}_{\varrho H} \rangle_\Omega \, = \,
        \langle u_\varrho - u_{\varrho h} ,
        z_{\varrho H} - \widetilde{z}_{\varrho H} \rangle_{L^2(\Omega)} \\
  && \leq \, \| u_\varrho - u_{\varrho h} \|_{L^2(\Omega)}
           \| z_{\varrho H} - \widetilde{z}_{\varrho H} \|_{L^2(\Omega)} 
  \, \leq \, c \, h^s \, \sqrt{
           \| \overline{u} \|^2_{H^s(\Omega)} + \| g_{\pm} \|^2_{H^s(\Omega)}} \, 
           H^{-1} \, \| z_{\varrho H} - \widetilde{z}_{\varrho H}
           \|_{H^{-1}(\Omega)},
\end{eqnarray*}
when using an inverse inequality in $Z_H$, and the related error
estimates for the approximate state. This gives
\[
  \| z_{\varrho H} - \widetilde{z}_{\varrho H} \|_{H^{-1}(\Omega)} \leq
  c \, h^{s-1} \, \Big(
  \| \overline{u} \|^2_{H^s(\Omega)} + \| g_{\pm} \|^2_{H^2(\Omega)}
  \Big),
\]
and, therefore,
\begin{eqnarray*}
  \| z_\varrho - \widetilde{z}_{\varrho H} \|_{H^{-1}(\Omega)}
  & \leq & \| z_\varrho - z_{\varrho H} \|_{H^{-1}(\Omega)} +
           \| z_{\varrho H} - \widetilde{z}_{\varrho H} \|_{H^{-1}(\Omega)} 
  \, \leq \, c \, h^{s-1} \, \Big(
  \| \overline{u} \|^2_{H^s(\Omega)} + \| g_{\pm} \|^2_{H^2(\Omega)}
  \Big)
\end{eqnarray*}
follows. Note that we cannot expect any order of convergence for the
approximate control in $H^{-1}(\Omega)$ when we have
$\overline{u} \in H^s(\Omega)$ for $s < 1$ only. In this case we have
to measure the error in a weaker norm. For this we consider,
using the $L^2$ projection $Q_H : L^2(\Omega) \to Z_H$, 
\begin{eqnarray*}
  \| z_\varrho - \widetilde{z}_{\varrho H} \|_{H^{-2}(\Omega)}
  & = & \sup\limits_{0 \neq v \in H^1_0(\Omega) \cap H^2(\Omega)}
        \frac{\langle z_\varrho - \widetilde{z}_{\varrho H}, v \rangle_\Omega}
        {\| v \|_{H^2(\Omega)}} \\
  && \hspace*{-3.7cm} = \,
     \sup\limits_{0 \neq v = A^{-1}\psi \in H^1_0(\Omega) \cap H^2(\Omega)}
     \frac{\langle z_\varrho - \widetilde{z}_{\varrho H},
     A^{-1} \psi \rangle_\Omega}{\| v \|_{H^2(\Omega)}} \\
  && \hspace*{-3.7cm}
     = \sup\limits_{0 \neq v = A^{-1}\psi \in H^1_0(\Omega) \cap H^2(\Omega)}
     \frac{\langle A^{-1} (z_\varrho - \widetilde{z}_{\varrho H}),
     \psi \rangle_\Omega}{\| v \|_{H^2(\Omega)}} \\
  && \hspace*{-3.7cm}
     = \sup\limits_{0 \neq v = A^{-1}\psi \in H^1_0(\Omega) \cap H^2(\Omega)}
     \frac{\langle A^{-1} (z_\varrho - \widetilde{z}_{\varrho H}),
     \psi - Q_H \psi \rangle_\Omega +
     \langle A^{-1} (z_\varrho - \widetilde{z}_{\varrho H}),
     Q_H \psi \rangle_\Omega }{\| v \|_{H^2(\Omega)}} \\
  && \hspace*{-3.7cm}
     = \sup\limits_{0 \neq v = A^{-1}\psi \in H^1_0(\Omega) \cap H^2(\Omega)}
     \frac{\langle A^{-1} (z_\varrho - \widetilde{z}_{\varrho H}),
     \psi - Q_H \psi \rangle_\Omega + \langle u_\varrho - u_{\varrho h},
     Q_H \psi \rangle_\Omega }{\| v \|_{H^2(\Omega)}} \\
  && \hspace*{-3.7cm}
     = \sup\limits_{0 \neq v = A^{-1}\psi \in H^1_0(\Omega) \cap H^2(\Omega)}
     \frac{\| z_\varrho - \widetilde{z}_{\varrho H} \|_{H^{-1}(\Omega)}
     \| \psi - Q_H \psi \|_{H^{-1}(\Omega)} +
     \| u_\varrho - u_{\varrho h} \|_{L^2(\Omega)}
     \| Q_H \psi \|_{L^2(\Omega)} }{\| v \|_{H^2(\Omega)}} \\
  && \hspace*{-3.7cm}
     \leq \sup\limits_{0 \neq v = A^{-1}\psi \in H^1_0(\Omega) \cap H^2(\Omega)}
     \frac{c \, H \,
     \| z_\varrho - \widetilde{z}_{\varrho H} \|_{H^{-1}(\Omega)}
     \| \psi \|_{L^2(\Omega)} + \| u_\varrho - u_{\varrho h} \|_{L^2(\Omega)}
     \| \psi \|_{L^2(\Omega)} }{\| v \|_{H^2(\Omega)}} \\[2mm]
  && \hspace*{-3.7cm} \leq \, c \, H \,
     \| z_\varrho - \widetilde{z}_{\varrho H} \|_{H^{-1}(\Omega)}
     + \| u_\varrho - u_{\varrho h} \|_{L^2(\Omega)} 
     \, = \, c \, h^s \, \Big(
     \| \overline{u} \|_{H^s(\Omega)} +
     \| g_\pm \|_{H^2(\Omega)} \Big) ,
\end{eqnarray*}
when assuming $\overline{u} \in H^s_0(\Omega)$ for $s \in [0,1]$, or
$\overline{u} \in H^1_0(\Omega) \cap H^s(\Omega)$ for $s \in (1,2]$.
In particular, this error estimate also allows the consideration of
discontinuous target functions $\overline{u} \in H^s_0(\Omega)$,
$s < 1/2$. However, the application of the inverse $A^{-1}$ does not
allow a direct evaluation, and hence we need to introduce a suitable
approximation as follows:
For any $z \in H^{-1}(\Omega)$ we define $p_z \in H^1_0(\Omega)$
as the unique solution of the variational formulation
\[
  \langle A p_z , q \rangle_\Omega =
  \langle \nabla p_z , \nabla q \rangle_{L^2(\Omega)} =
  \langle z , q \rangle_\Omega \quad \mbox{for all} \; q \in H^1_0(\Omega).
\]
Moreover, we compute an approximate solution
$p_{z h} \in V_h$, for simplicity we consider the finite element space
$V_h$ as already used for the approximation of the state, such that
\[
  \langle \nabla p_{z h} , \nabla q_h \rangle_{L^2(\Omega)} =
  \langle z , q_h \rangle_\Omega \quad \mbox{for all} \; q_h \in V_h
\]
which defines an approximation $\widetilde{A}^{-1} z := p_{z h}$ of
$p_z = A^{-1} z$. From
\[
  \| \nabla p_{zh} \|^2_{L^2(\Omega)} =
  \langle \nabla p_{zh} , \nabla p_{zh} \rangle_{L^2(\Omega)} =
  \langle z , p_{zh} \rangle_\Omega \leq
  \| z \|_{H^{-1}(\Omega)} \| \nabla p_{zh} \|_{L^2(\Omega)}
\]
we immediately conclude boundedness, i.e.,
\[
  \| \widetilde{A}^{-1} z \|_{H^1_0(\Omega)} =
  \| p_{zh}\|_{H^1_0(\Omega)} =
  \| \nabla p_{zh} \|_{L^2(\Omega)} \leq \| z \|_{H^{-1}(\Omega)} \quad
  \mbox{for all} \; z \in H^{-1}(\Omega).
\]
For $z_H \in Z_H$ let $p_{z_Hh} = \widetilde{A}^{-1} z_H \in V_h$ be the
unique solution of the variational formulation
\[
  \langle \nabla p_{z_Hh} , \nabla q_h \rangle_{L^2(\Omega)} =
  \langle z_H , q_h \rangle_{L^2(\Omega)} \quad
  \mbox{for all} \; q_h \in V_h ,
\]
while $p_{z_H} = A^{-1} z_H$ solves
\[
  \langle \nabla p_{z_H} , \nabla q \rangle_{L^2(\Omega)} =
  \langle z_H , q \rangle_{L^2(\Omega)} \quad
  \mbox{for all} \; q \in H^1_0(\Omega) ,
\]
which is the weak formulation of the Poisson equation
$- \Delta p_{z_H} = z_H$ with homogeneous
Dirichlet boundary conditions.
When using standard arguments, i.e., Cea's lemma and the approximation
property of $V_h$, we conclude the error error estimate
\begin{eqnarray*}
  \| \nabla (p_{z_H} - p_{z_Hh}) \|_{L^2(\Omega)}
  & \leq &
           \inf\limits_{q_h \in V_h} \| \nabla (p_{z_H} - q_h) \|_{L^2(\Omega)} 
  \, \leq \,
           c_A \, h \, | p_{z_H} |_{H^2(\Omega)} \\
  & \leq &
  \widetilde{c}_A \, h \, \| \Delta p_{z_H} \|_{L^2(\Omega)} =
  \widetilde{c}_A \, h \, \| z_H \|_{L^2(\Omega)} \leq
  \widetilde{c}_A c_I \, \frac{h}{H} \, \| z_H \|_{H^{-1}(\Omega)},
\end{eqnarray*}
and when using an inverse inequality, e.g., \cite{Steinbach:2008}.
In the case
\begin{equation}\label{mesh condition}
h < \frac{1}{2\widetilde{c}_A c_I} \, H
\end{equation}
we therefore have
\[
  \| \nabla (p_{z_H} - p_{z_Hh}) \|_{L^2(\Omega)} \leq \frac{1}{2} \,
  \| z_H \|_{H^{-1}(\Omega)} .
\]
Hence we can write
\begin{eqnarray*}
  \langle \widetilde{A}^{-1} z_H , z_H \rangle_\Omega
  & = & \langle p_{z_Hh} , z_H \rangle_{L^2(\Omega)} \, = \,
        \langle p_{z_H} , z_H  \rangle_\Omega -
        \langle p_{z_H} - p_{z_Hh} , z_H \rangle_{\Omega} \\
  & \geq &\langle A^{-1} z_H , z_H \rangle_\Omega -
           \| \nabla (p_{z_H} - p_{z_Hh}) \|_{L^2(\Omega)}
           \| z_H \|_{H^{-1}(\Omega)} \geq
           \frac{1}{2} \, \| z_H \|^2_{H^{-1}(\Omega)} .
\end{eqnarray*}
The approximate operator $\widetilde{A}$ is therefore discrete elliptic,
if the mesh condition \eqref{mesh condition} is satisfied.
Although the constants $\widetilde c_A$ and $c_I$ are in general unknown, in our
numerical experiments we have used $h = \frac{1}{4}H$.

Instead of \eqref{FEM control} we now consider the 
variational formulation to find $\widehat{z}_{\varrho H} \in Z_H$ such that
\begin{equation}\label{control pert VF}
  \langle \widetilde{A}^{-1} \widehat{z}_{\varrho H}, \psi_H \rangle_\Omega
  = \langle u_{\varrho h} , \psi_H \rangle_{L^2(\Omega)} \quad
  \mbox{for all} \; \psi_H \in Z_H .
\end{equation}
Unique solvability of \eqref{control pert VF} follows since
$\widetilde{A}^{-1}$ is discrete elliptic. Note that
\eqref{control pert VF} can be written as
a mixed variational formulation to find
$(\widehat{z}_{\varrho H},p_{\widehat{z}_{\varrho H}h})
\in Z_H \times V_h$ such that
\[
  \langle p_{\widehat{z}_{\varrho H}h} , \psi_H \rangle_{L^2(\Omega)} =
  \langle u_{\varrho h} , \psi_H \rangle_{L^2(\Omega)}, \quad
  \langle \nabla p_{\widehat{z}_{\varrho H}h} ,
  \nabla q_h \rangle_{L^2(\Omega)} =
  \langle \widehat{z}_{\varrho H} , q_h \rangle_{L^2(\Omega)}
\]
is satisfied for all $(\psi_H,q_h) \in Z_H \times V_h$.
This formulation is equivalent to the linear system of algebraic equations,
\[
  \hat{M}_h^\top \underline{p} = \hat{M}_h^\top \underline{u}, \quad
  K_h \underline{p} = \hat{M}_h \underline{z},
\]
where in addition to the standard finite element stiffness matrix $K_h$
we have used the mass matrix $\hat M_h$ defined by
\[
  \hat{M}_h[j,\ell] =
  \langle \psi_\ell , \varphi_j \rangle_{L^2(\Omega)}, \quad
  \ell=1,\ldots,N; \; j=1,\ldots, M.
\]
Since the finite element stiffness matrix $K_h$ is invertible,
we can eliminate $\underline{p} = K_h^{-1} \hat{M}_h \underline{z}$ to end
up with the Schur complement system to be solved,
\begin{equation}\label{Schur complement z}
  \hat{M}_h^\top K_h^{-1} \hat{M}_h \underline{z} =
  \hat{M}_h^\top \underline{u} .
\end{equation}
The mesh condition \eqref{mesh condition} not only implies unique
solvability of \eqref{Schur complement z}, but the discrete
ellipticity of $\widetilde{A}^{-1}$ also provides related error estimates,
when applying the Strang lemma, e.g.,
\cite{BrennerScott:1994, Steinbach:2008}. With this
we conclude the final error estimate
\begin{equation}
  \| z_\varrho - \widehat{z}_{\varrho H} \|_{H^{-2}(\Omega)} \leq
  c \, h^s \, \Big( \| \overline{u} \|_{H^s(\Omega)} +
  \| g_\pm \|_{H^2(\Omega)} \Big) \, .
\end{equation}
Note that this estimate remains true when considering control constraints
$f_\pm \in L^2(\Omega)$.

\section{Semi-smooth Newton method}\label{Section:Newton}
In this section we discuss the iterative solution of the discrete
variational inequality \eqref{state LGS}.
 For the solution
of \eqref{nonlinear 1 state} and \eqref{nonlinear 2 state} we can apply
a semi-smooth Newton method which is equivalent to an active set strategy
as given in Algorithm~\ref{alg:active-set},
see \cite{Chen:2001,Hintermueller:2002,Hintermueller:2004,Ito:2008},
and \cite{Steinbach:2014}. The generalization to the iterative
solution of \eqref{nonlinear 1 control} and \eqref{nonlinear 2 control}
is straightforward and will not be discussed here.

\begin{algorithm}
  \caption{Active set algorithm \cite{Hintermueller:2002}}
  \label{alg:active-set}
  \begin{algorithmic}
    \Require{Initial values $\underline{u}^0,\underline\lambda^0$}
    \State (a) $m = 0$
    \State (b) Set
    \[
      y_{+,k}^m = \lambda_k^m+c \, [g_+(x_k)-u_k^m]
      \quad \text{and} \quad y_{-,k}^m = \lambda_k^m+ c \, [g_-(x_k)-u_k^m]
    \]
    \While{stop criterion is not fulfilled}
    \State (i) Set
    \[
      \mathcal{I}^m=\{k:\, y_{+,k}^m\geq 0,\, y_{-,k}^m\leq 0\}, \quad
      \mathcal{A}^m_-=\{k:\, y_{-,k}^m>0\},\quad
      \mathcal{A}^m_+=\{k:\, y_{+,k}^m<0\}
    \]
    \State (ii) Solve
    \[
      (M_h+\varrho K_h)\underline u^{m+1}-\underline{\lambda}^{m+1}=
      \underline{\overline{u}}, \;
      u_k^{m+1}=g_\pm(x_k), \; k \in \mathcal{A}_\pm^m,\;
      \lambda_k^{m+1} =0 , \;  k\in\mathcal{I}^m. 
    \]
    \State (iii) $m = m+1$
    \EndWhile
  \end{algorithmic}
\end{algorithm}

The semi-smooth Newton method successively computes the roots of
$\underline F(\underline{u},\underline{\lambda}) = \underline{0}$ by  
\begin{equation}\label{eq:Newton-algo}
  \begin{pmatrix}
    \underline u^{m+1} \\
    \underline \lambda^{m+1}
  \end{pmatrix}
  =
  \begin{pmatrix}
    \underline u^m \\
    \underline \lambda^m
  \end{pmatrix}
  - \big( D\underline{F}(\underline{u}^m,\underline{\lambda}^m)\big)^{-1}
  \underline{F}(\underline{u}^m,\underline{\lambda}^m),
\end{equation}
with the Jacobian $D\underline F$ given by 
\[
  D\underline F(\underline u,\underline \lambda) =
  \begin{pmatrix}
    M_h + \varrho K_h & -I \\
    c(G_{min}'(\underline u,\underline \lambda) +
    G_{max}'(\underline u,\underline \lambda)) &
    I-(G_{min}'(\underline u,\underline \lambda) +
    G_{max}'(\underline u,\underline \lambda))
  \end{pmatrix} .
\]
The diagonal entries of
\begin{eqnarray*}
  G_{min}'(\underline{u},\underline{\lambda})
  & = & \mbox{diag} \Big( g_{min}' \big( \lambda_k +
        c \, [g_+(x_k)-u_k] \big) \Big), \\
  G_{max}'(\underline{u},\underline{\lambda})
  & = & \mbox{diag} \Big( g_{max}' \big( \lambda_k +
        c \, [g_-(x_k)-u_k] \big) \Big)
\end{eqnarray*}
are the slant derivatives of the functions
$g_{\min}(y) = \min \{ 0, y \}$ and $g_{\max}(y) = \max \{0,y \}$
defined by
\[
  g_{\min}'(y) =
  \begin{cases}
    1,&y< 0, \\
    0,& y\geq 0,
  \end{cases}
  \quad \mbox{and} \quad
  g_{\max}'(y) =
  \begin{cases}
    0,&y\leq 0, \\
    1,& y>0.
  \end{cases}
\]
Rewriting the system \eqref{eq:Newton-algo} gives
\begin{eqnarray}\nonumber
  && \begin{pmatrix}
    M_h + \varrho K_h & -I \\
    c(G_{\min}'(\underline{u}^m,\underline{\lambda}^m) +
    G_{\max}'(\underline{u}^m,\underline{\lambda}^m)) &
    I-(G_{\min}'(\underline{u}^m, \underline{\lambda}^m) +
    G_{\max}'(\underline{u}^m,\underline{\lambda}^m))
  \end{pmatrix}
  \begin{pmatrix}
    \underline{u}^{m}-\underline{u}^{m+1}\\
    \underline{\lambda}^{m} -\underline{\lambda}^{m+1}
  \end{pmatrix} \\[3mm]
  && \hspace*{1cm} = F(\underline u^m,\underline \lambda^m) .
     \label{eq:rewritten-system} 
\end{eqnarray}
From the first line we get 
\[
  (M_h+\varrho K_h) ( \underline{u}^m -\underline{u}^{m+1}) -
  \underline{\lambda}^m + \underline{\lambda}^{m+1} =
  (M_h + \varrho K_h) \underline{u}^m - \underline{\lambda}^m -
  \underline{\overline{u}},
\]
from which we conclude 
\[
  (M_h + \varrho K_h) \underline{u}^{m+1} -
  \underline{\lambda}^{m+1} =
  \underline{\overline{u}} .
\]
With
\[
  y_{+,k}^m := \lambda_k^m + c \, [g_+(x_k)-u_k^m] \quad
  \mbox{and} \quad
  y_{-,k}^m := \lambda_k^m + c \, [g_-(x_k)-u_k^m],
\]
the second line reads, componentwise, 
\begin{eqnarray*}
  && \hspace*{-5mm}
     c [g_{\min}'(y_{+,k}^m) + g_{\max}'(y_{-,k}^m)] (u_k^m-u_k^{m+1})
     + \lambda_k^m - \lambda_k^{m+1} \\
  && - [g_{\min}'(y_{+,k}^m))+g_{\max}'(y_{-,k}^m)]
     (\lambda_k^m-\lambda_k^{m+1}) 
     =
     \lambda_k^m - \min \{ 0,y_{+,k}^m \} - \max \{0,y_{-,k}^m\}. 
\end{eqnarray*} 
We distinguish the following three cases. 
\begin{enumerate}
\item $y_{+,k}^m\geq 0$ and $y_{-,k}^m\leq 0:$ Then,
  $g_{min}'(y_{+,k}^m)= g_{max}'(y_{-,k}^m) =0$, and we compute
  \[
    \lambda_k^{m+1}=0.
  \]
\item $ y_{-,k}^m>0:$ From this we get $\lambda_k^m > c \, [u_k^m-g_-(x_k)]$
  and we compute
  \[
    \lambda_k^m + c \, [ g_+(x_k)-u_k^m] >
    c \, [u_k^m-g_-(x_k)+g_+(x_k)-u_k^m] =
    c \, [g_+(x_k)-g_-(x_k)] > 0,
  \]
  i.e., $y_{+,k}^m>0$. Therefore, $g_{\min}'(y_{+,k}^m)=0$ and
  $g_{\max}'(y_{-,k}^m)=1$, and we get
  \[
    u_k^{m+1} = g_-(x_k).
  \]
\item $ y_{+,k}^m<0:$ Then, as in the second case, we compute
  $y_{-,k}^m<0 $ to get $g_{\min}'(y_{+,k}^m)=1$, $g_{\max}'(y_{-,k}^m)=0$,
  and thus
  \[
    u_k^{m+1}=g_+(x_k).
  \]
\end{enumerate}
Therefore we see, that the iterates of the semi-smooth Newton method
\eqref{eq:Newton-algo} fulfill the active set strategy as given in
Algorithm~\ref{alg:active-set}. 

\section{Numerical results}\label{Section:Results}
For our numerical tests we consider the domain $\Omega = (0,1)^2$ and the
following target functions
$\overline u_i\in \mathcal{C}^\infty(\Omega)\cap H_0^1(\Omega)$, $i=1,2$, 
\begin{align*}\label{eq:function-ubar1}
	\overline u_1(x,y) &= \sin(\pi x)\sin(\pi y),
	%z_1(x,y) = -2\pi\sin(\pi x)\sin(\pi y)
\end{align*} 
and
\begin{align*}
  \overline u_2(x,y) := \overline u_2(x,y;k) = H_k(x)H_k(y),
  \quad \text{where }
  H_k(s)=\frac{1}{1+e^{-k(s-0.25)}}-\frac{1}{1+e^{-k(s-0.75)}},
\end{align*}
for $k=40$, see Fig.~\ref{fig:adaptive-target-functions}, for which we
can compute $z_i=-\Delta \overline u_i$ analytically. We also consider the
discontinuous target
$\overline u_3:=\lim_{k\to\infty}\overline u_2(\cdot,\cdot;k)\in
H^{1/2-\varepsilon}(\Omega)$,  $\varepsilon>0$, for which we \emph{cannot}
compute the control analytically, given as  
\begin{align*}
	\overline u_3(x,y) = \begin{cases}
		1,& (x,y)\in[0.25,0.75]^2,\\
		0,&\text{else}.
	\end{cases}
\end{align*}

\begin{figure}[htbp]
	\centering
	\begin{tabular}{ccc}
		\begin{subfigure}[b]{0.25\textwidth}
			\centering 
			\includegraphics[width=\textwidth]{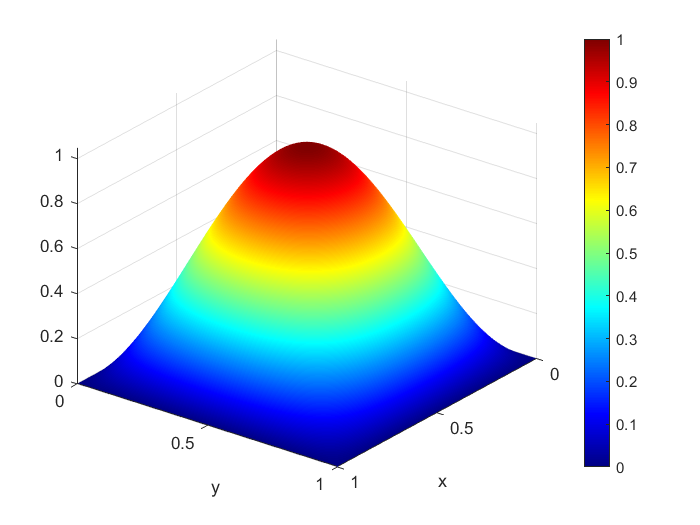}
			\caption{$\overline u_{1}$}
		\end{subfigure}
		&
		\begin{subfigure}[b]{0.25\textwidth}
			\centering 
			\includegraphics[width=\textwidth]{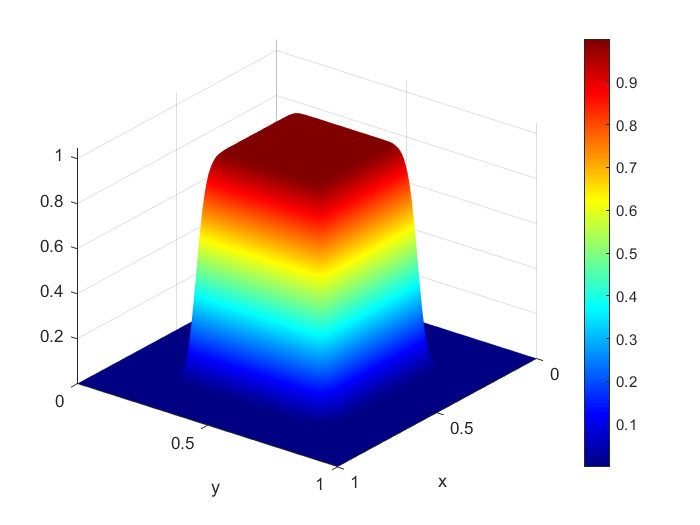}
			\caption{$\overline u_{2}$}
		\end{subfigure}
		&
		\begin{subfigure}[b]{0.25\textwidth}
			\centering
			\includegraphics[width=\textwidth]{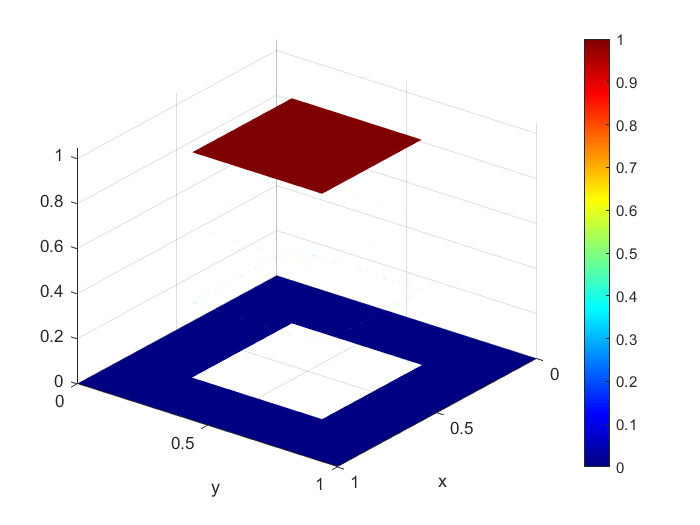}
			\caption{$\overline u_3$}
		\end{subfigure}
		\\
		\begin{subfigure}[b]{0.27\textwidth}
			\centering 
			\includegraphics[width=\textwidth]{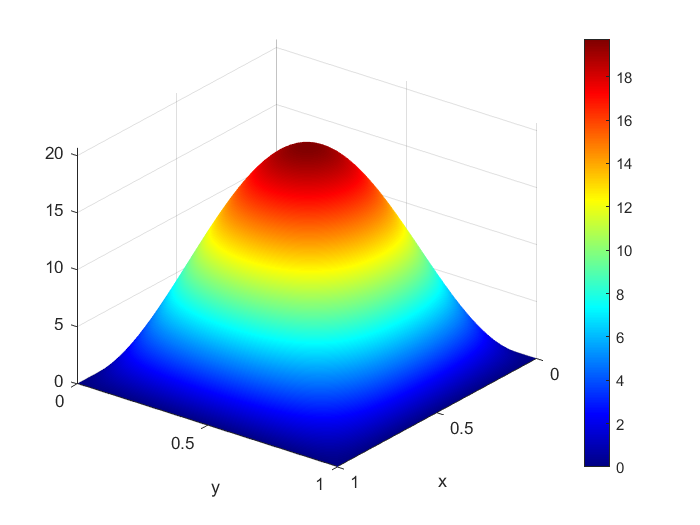}
			\caption{$z_1=-\Delta\overline u_{1}$}
		\end{subfigure}
		&
		\begin{subfigure}[b]{0.27\textwidth}
			\centering 
			\includegraphics[width=\textwidth]{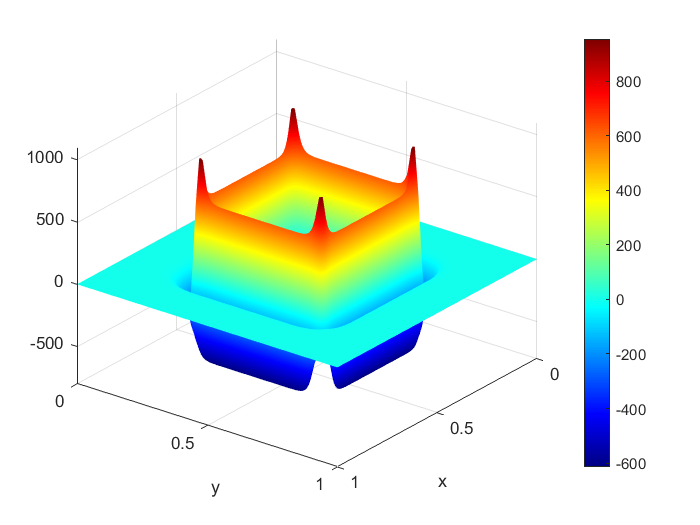}
			\caption{$z_2=-\Delta\overline u_{2}$}
		\end{subfigure}
		&
	\end{tabular}
	\caption{Target functions $\overline u_{i}$ and
		$z_j=-\Delta \overline{u}_j$, $i=1,2,3$, $j=1,2$.}
	\label{fig:adaptive-target-functions}
\end{figure}

\subsection{State constraints}
In order to incorporate constraints on the state, we apply the semi-smooth
Newton algorithm, where we set $\varrho =h^2$, and the initial values
\[
  \underline u^0=(h^2K_h+M_h)^{-1}
  \underline {\overline u}\in\mathbb{R}^M\quad
  \text{and}\quad \underline \lambda^0=\underline 0.
\]
A stopping criterion is then defined using a maximal absolute error in each
node, i.e., we stop if 
\begin{align}\label{stopping}
	\text{tol}:=\max\{\text{tol}_+,\text{tol}_-\}<10^{-5}, 
\end{align}
where
\begin{align*}
  \text{tol}_+:=\max_{\{k:u_k>g_{+}(x_k)\}}|u_k-g_{+}(x_k)|
  \quad \text{and}\quad
  \text{tol}_-:=\max_{\{k: u_k<g_{-}(x_k)\}}| u_k-g_{-}(x_k)|. 
\end{align*}  
After computing the state
$\underline u \leftrightarrow u_{\varrho h}\in K_{sh}$, we can reconstruct
the control $\underline{z} \leftrightarrow z_{\varrho H} \in Z_H$ by solving
the Schur complement system \eqref{Schur complement z}.
In order to ensure stability of the discrete system, we choose $h = H/4$. 
For the targets $\overline u_i$ we consider the upper and lower constraints
$g_{\pm}^{(j)}$ given by 
\[
  g_{-}^{(1)}(x)\equiv 0, \; g_{+}^{(1)}(x)=0.5\cdot\overline u_1(x), \quad
  g_{-}^{(2)}(x)\equiv 0, \; g_{+}^{(2)}(x)=0.5\cdot\overline u_2(x).
\]
The results are depicted in Fig.~\ref{fig:state-reconstruction} and
Fig.~\ref{fig:state-reconstruction-u3}. Note that
$g_+^{(2)}(x,y)\leq\overline u_i(x,y)$ for $i=1,2$ and all $(x,y)\in\Omega$.
Thus, the constrained solutions as shown in
Fig.~\ref{fig:state-reconstruction} (c) and (i) as well as the controls
in (f) and (l) coincide.

\begin{figure}[htbp]
	\centering
	\begin{tabular}{ccc}
		\begin{subfigure}[b]{0.25\textwidth}
			\centering 
			\includegraphics[width=\textwidth]{figures/ubar1.png}
			\caption{$\overline u_1$ }
		\end{subfigure}
		&
		\begin{subfigure}[b]{0.25\textwidth}
			\centering 
			\includegraphics[width=\textwidth]{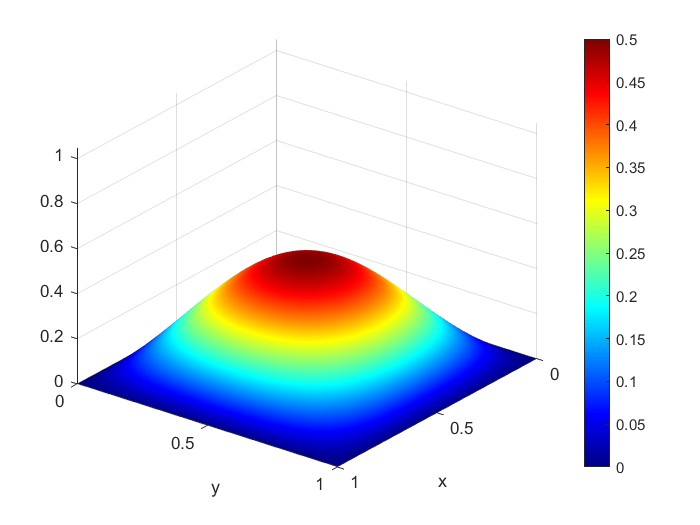}
			\caption{$u_{\varrho h,1}$, $g_\pm^{(1)}$}
		\end{subfigure}
		&
		\begin{subfigure}[b]{0.25\textwidth}
			\centering 
			\includegraphics[width=\textwidth]{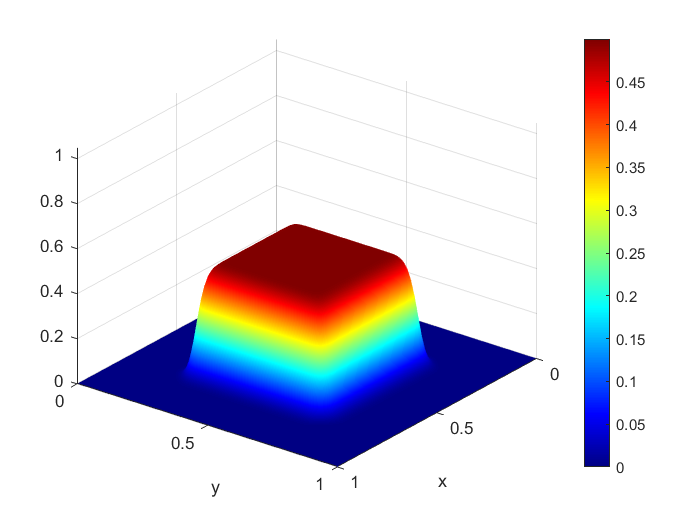}
			\caption{$u_{\varrho h,1}$, $g_\pm^{(2)}$}
		\end{subfigure}
		\\
		\begin{subfigure}[b]{0.25\textwidth}
			\centering 
			\includegraphics[width=\textwidth]{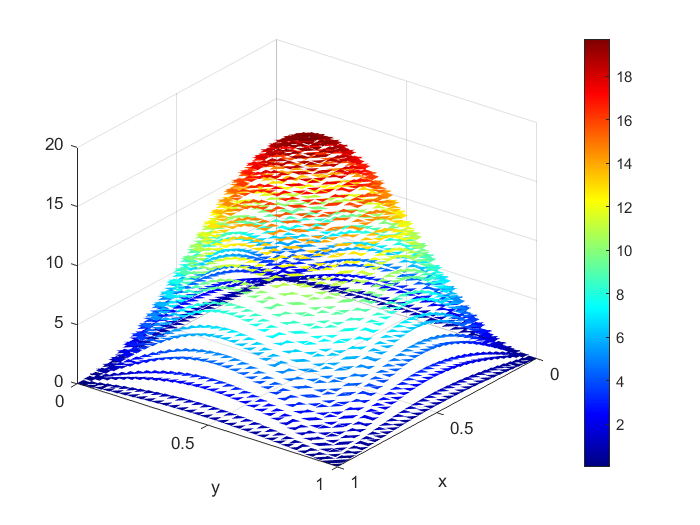}
			\caption{$z_1$ }
		\end{subfigure}
		&
		\begin{subfigure}[b]{0.25\textwidth}
			\centering 
			\includegraphics[width=\textwidth]{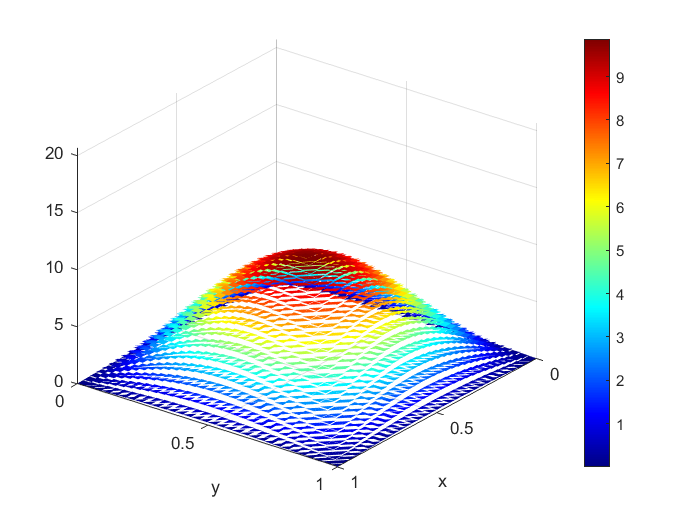}
			\caption{$z_{\varrho H,1}$, $g_\pm^{(1)}$}
		\end{subfigure}
		&
		\begin{subfigure}[b]{0.25\textwidth}
			\centering 
			\includegraphics[width=\textwidth]{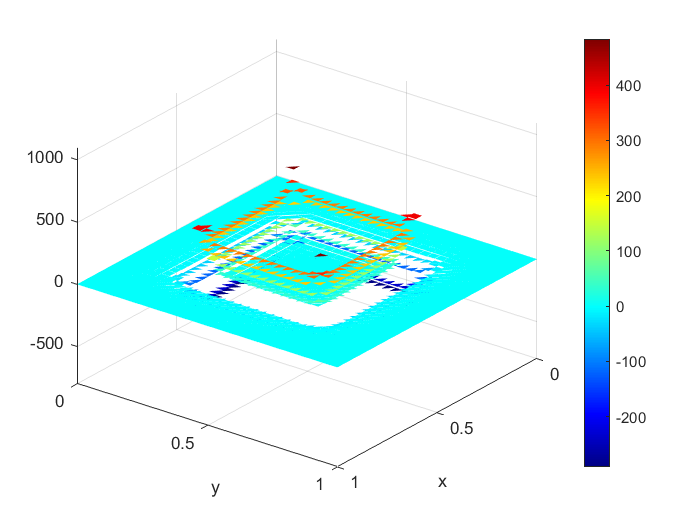}
			\caption{$z_{\varrho H,1}$, $g_\pm^{(2)}$}
		\end{subfigure}
		\\
		\begin{subfigure}[b]{0.25\textwidth}
			\centering 
			\includegraphics[width=\textwidth]{figures/ubar2.png}
			\caption{$\overline u_2$ }
		\end{subfigure}
		&
		\begin{subfigure}[b]{0.25\textwidth}
			\centering 
			\includegraphics[width=\textwidth]{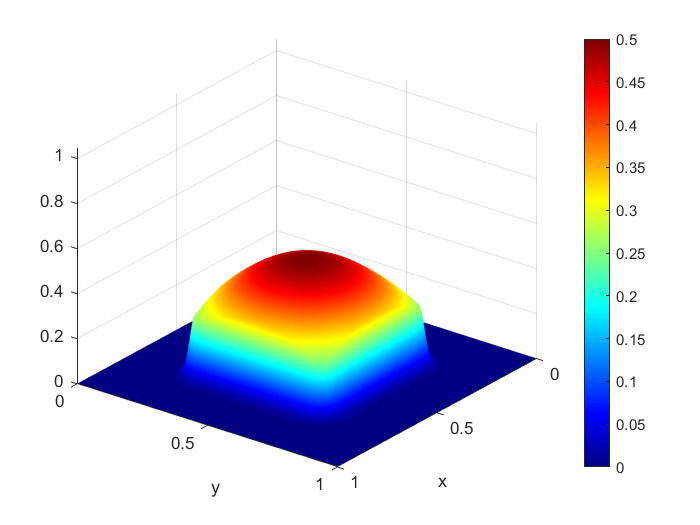}
			\caption{$u_{\varrho h,2}$, $g_\pm^{(1)}$}
		\end{subfigure}
		&
		\begin{subfigure}[b]{0.25\textwidth}
			\centering 
			\includegraphics[width=\textwidth]{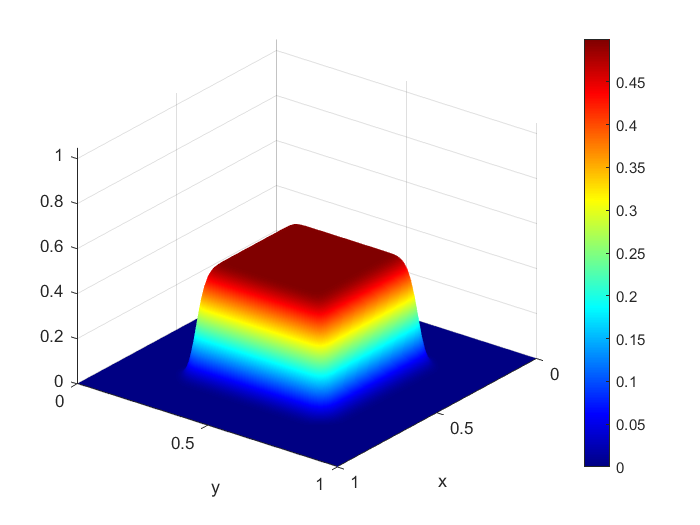}
			\caption{$u_{\varrho h,2}$, $g_\pm^{(2)}$}
		\end{subfigure}
		\\
		\begin{subfigure}[b]{0.25\textwidth}
			\centering 
			\includegraphics[width=\textwidth]{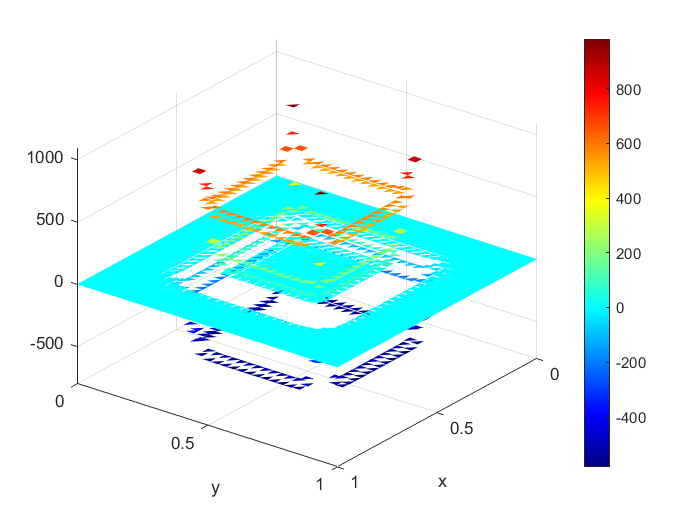}
			\caption{$z_2$ }
		\end{subfigure}
		&
		\begin{subfigure}[b]{0.25\textwidth}
			\centering 
			\includegraphics[width=\textwidth]{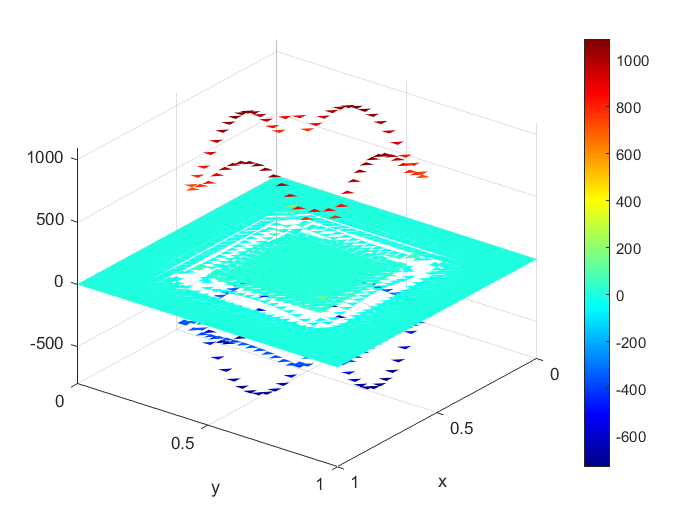}
			\caption{$z_{\varrho H,2}$, $g_\pm^{(1)}$}
		\end{subfigure}
		&
		\begin{subfigure}[b]{0.25\textwidth}
			\centering 
			\includegraphics[width=\textwidth]{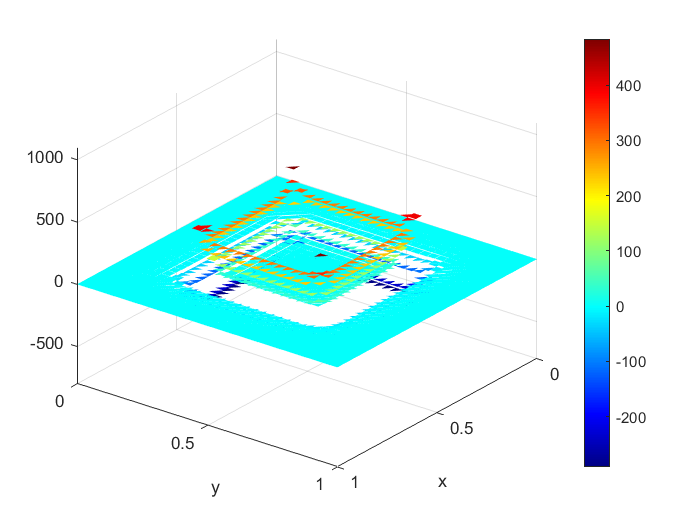}
			\caption{$z_{\varrho H,2}$, $g_\pm^{(2)}$}
		\end{subfigure}
	\end{tabular}
	\caption{Targets $\overline u_i$ and unconstrained controls $z_i$, $i=1,2$, computed constrained states $u_{\varrho h,i}$ on a mesh with $N=32768$ elements and $M=16129$ DoFs with constraints $g_\pm^{(j)}$, and reconstruction of the controls $z_{\varrho H,i}$ on a mesh with $N_H=2048$ elements. }
	\label{fig:state-reconstruction}
\end{figure}

\subsection{Control constraints}
In order to incorporate contraints on the control, we apply the 
semi-smooth Newton algorithm, where we set $\varrho =h^2$, and the
initial values
\[
  \underline u^0=(h^2K_h+M_h)^{-1}\underline {\overline u}\in\mathbb{R}^M\quad
  \text{and}\quad \underline{w}^0=\underline 0.
\]
Again we apply the stopping criteria \eqref{stopping} but now we use
\begin{align*}
  \text{tol}_+:=\max_{\{k:(K_h\underline u)_k>f_{+,k}\}}
  |(K_h\underline u)_k-f_{+,k}| \quad \text{and}\quad
  \text{tol}_-:=\max_{\{k:(K_h\underline u)_k<f_{-,k}\}}
  |(K_h\underline u)_k-f_{-,k}|. 
\end{align*}  
For the upper and lower constraints on the control we consider the functions
$f_{\pm}^{(j)}$ given by 
\begin{align*}
  f_{-}^{(1)}(x)\equiv 0\quad
  &\text{and}\quad  f_{+}^{(1)}(x)=0.5\cdot z_1(x)\\
  f_{-}^{(2)}(x)\equiv 0\quad
  &\text{and}\quad  f_{+}^{(2)}(x)=\min\{z_1(x),10\}\\
  f_{-}^{(3)}(x)= \max\{\min\{z_2(x),0\},-500\}\quad
  &\text{and}\quad  f_{+}^{(3)}(x)=\min\{\max\{z_2(x),0\},500\}\\
  f_{-}^{(4)}(x)\equiv 0\quad
  &\text{and}\quad  f_{+}^{(4)}(x)=\min\{\max\{z_2(x),0\},1000\}\\
  f_{-}^{(5)}(x)\equiv 0\quad
  &\text{and}\quad  f_{+}^{(5)}(x)=4\cdot \min\{\max\{z_2(x),0\},250\}.
\end{align*} 
The results are depicted in Fig.~\ref{fig:control-reconstruction} and
Fig.~\ref{fig:state-reconstruction-u3}. Note, that for $\varrho=0$ the
control for $\overline u_3$ is given by
$z_3=-\Delta \overline u_3 \in H^{-3/2-\varepsilon}(\Omega)$ and thus can
only be seen in a distributional sense. If we compute the control on a
sufficiently fine mesh, this behaviour is resembled and the control explodes
only in some points. Thus, we also give the reconstruction on a coarser mesh
in Fig.~\ref{fig:state-reconstruction-u3} and with suitable constraints,
which still gives a meaningful control.

\begin{figure}[htbp]
	\centering
	\begin{subfigure}[b]{0.25\textwidth}
		\centering 
		\includegraphics[width=\textwidth]{figures/ubar1.png}
		\caption{$\overline u_1$ }
	\end{subfigure}
	\begin{subfigure}[b]{0.25\textwidth}
		\centering 
		\includegraphics[width=\textwidth]{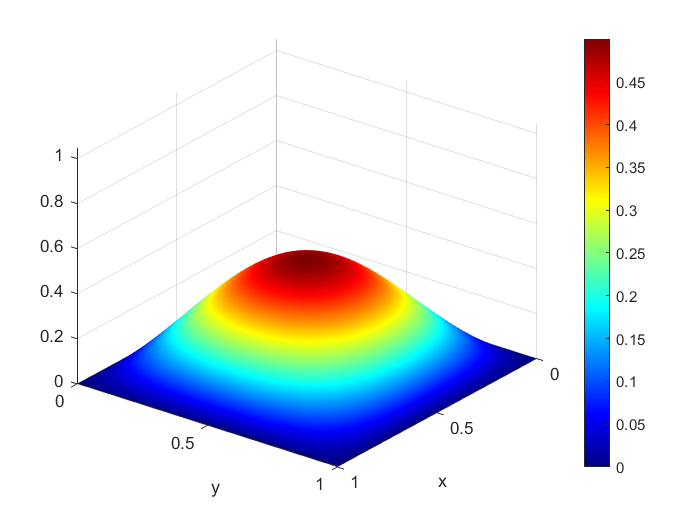}
		\caption{$u_{\varrho h,1}$, $f_\pm^{(1)}$}
	\end{subfigure}
	\begin{subfigure}[b]{0.25\textwidth}
		\centering 
		\includegraphics[width=\textwidth]{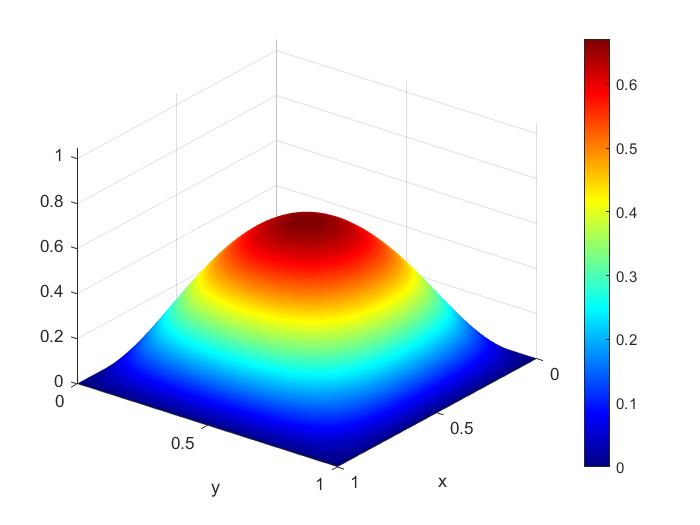}
		\caption{$u_{\varrho h,1}$, $f_\pm^{(2)}$}
	\end{subfigure}
	\\
	\begin{subfigure}[b]{0.25\textwidth}
		\centering 
		\includegraphics[width=\textwidth]{figures/z1_triags.png}
		\caption{$z_1$ }
	\end{subfigure}
	\begin{subfigure}[b]{0.25\textwidth}
		\centering 
		\includegraphics[width=\textwidth]{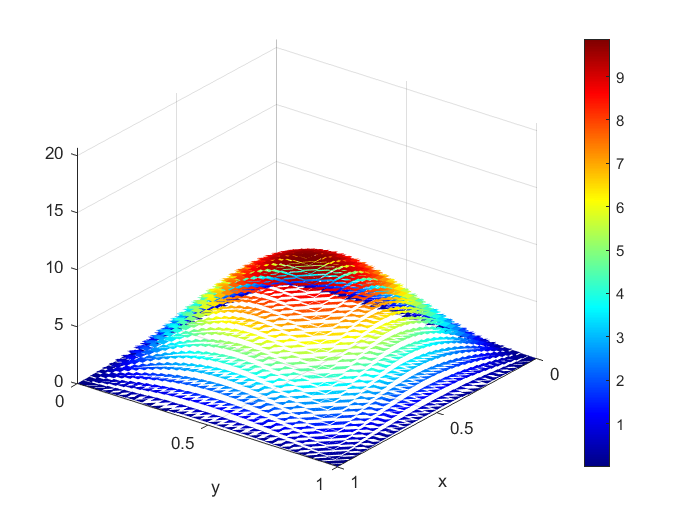}
		\caption{$z_{\varrho H,1}$, $f_\pm^{(1)}$}
	\end{subfigure}
	\begin{subfigure}[b]{0.25\textwidth}
		\centering 
		\includegraphics[width=\textwidth]{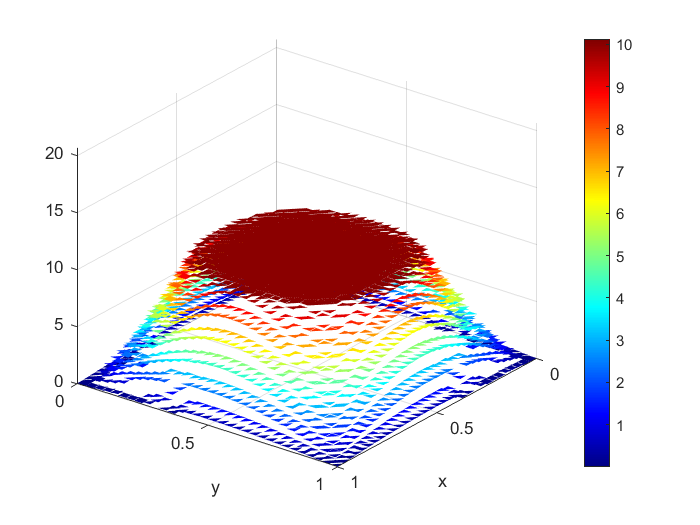}
		\caption{$z_{\varrho H,1}$, $f_\pm^{(2)}$}
	\end{subfigure}
	\\
	\begin{subfigure}[b]{0.25\textwidth}
		\centering 
		\includegraphics[width=\textwidth]{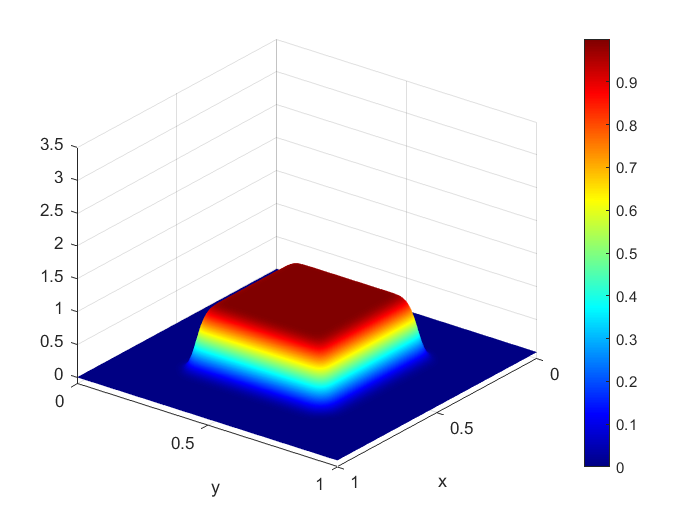}
		\caption{$\overline u_2$ }
	\end{subfigure}
	\begin{subfigure}[b]{0.25\textwidth}
		\centering 
		\includegraphics[width=\textwidth]{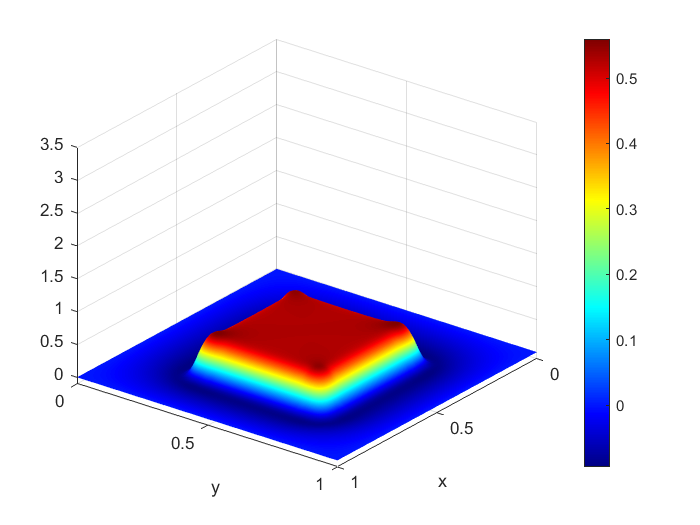}
		\caption{$u_{\varrho h,2}$, $f_\pm^{(3)}$}
	\end{subfigure}
	\begin{subfigure}[b]{0.25\textwidth}
		\centering 
		\includegraphics[width=\textwidth]{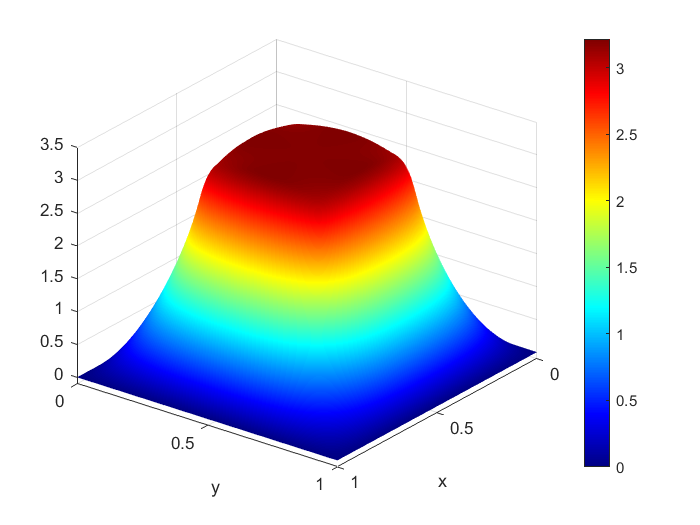}
		\caption{$u_{\varrho h,2}$, $f_\pm^{(4)}$}
	\end{subfigure}
	\\
	\begin{subfigure}[b]{0.25\textwidth}
		\centering 
		\includegraphics[width=\textwidth]{figures/z2_triags.png}
		\caption{$z_2$ }
	\end{subfigure}
	\begin{subfigure}[b]{0.25\textwidth}
		\centering 
		\includegraphics[width=\textwidth]{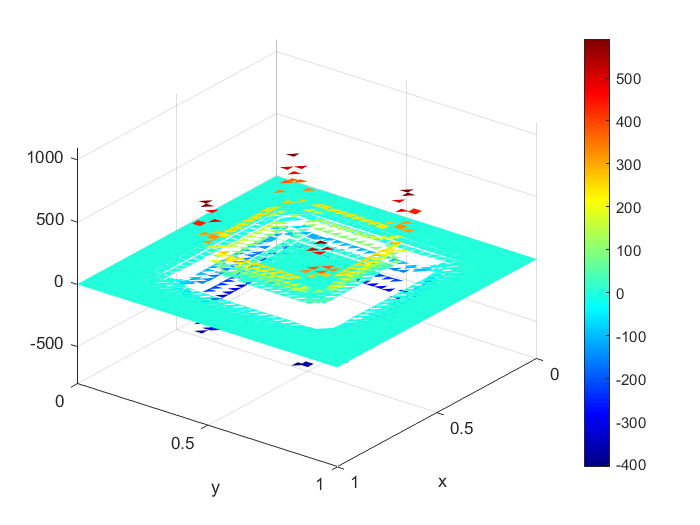}
		\caption{$z_{\varrho H,2}$, $f_\pm^{(3)}$}
	\end{subfigure}
	\begin{subfigure}[b]{0.25\textwidth}
		\centering 
		\includegraphics[width=\textwidth]{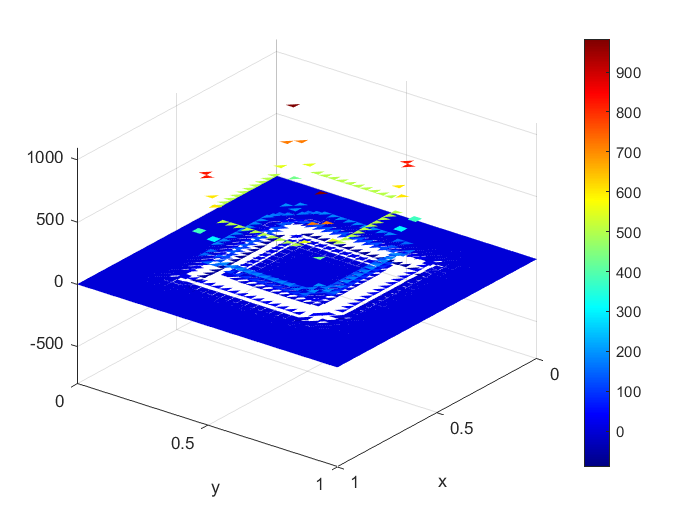}
		\caption{$z_{\varrho H,2}$, $f_\pm^{(4)}$}
	\end{subfigure}
	\caption{Targets $\overline u_i$ and unconstrained controls $z_i$, $i=1,2$, computed states $u_{\varrho h,i}$ on a mesh with $N=32768$ elements and $M=16129$ DoFs and reconstruction of the constrained controls $z_{\varrho H,i}$, with constraints $f_{\pm}^{(j)}$, on a mesh with $N_H=2048$ elements. }
	\label{fig:control-reconstruction}
\end{figure}

\begin{figure}[htbp]
	\centering
	\begin{tabular}{ccc}
		\begin{subfigure}[b]{0.24\textwidth}
			\centering 
			\includegraphics[width=\textwidth]{figures/ubar3.png}
			\caption{$\overline u_{3}$}
		\end{subfigure}
		&
		\begin{subfigure}[b]{0.24\textwidth}
			\centering 
			\includegraphics[width=\textwidth]{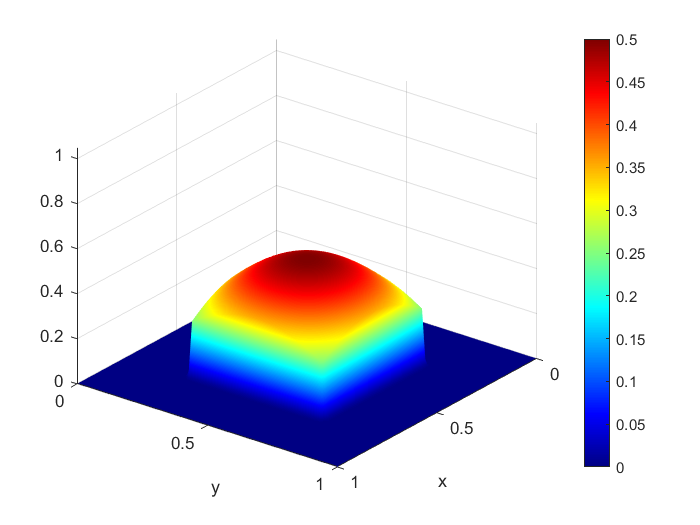}
			\caption{$u_{\varrho h,3}$, $g_\pm^{(1)}$}
		\end{subfigure}
		&
		\begin{subfigure}[b]{0.24\textwidth}
			\centering 
			\includegraphics[width=\textwidth]{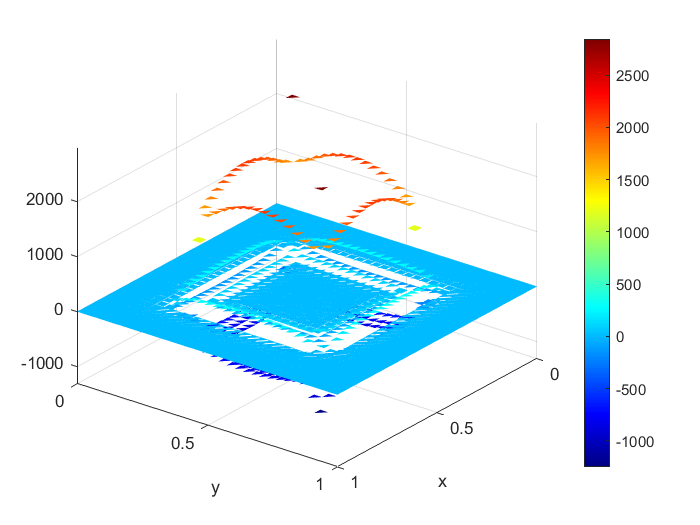}
			\caption{$z_{\varrho H,3}$, $g_\pm^{(1)}$}
		\end{subfigure}
		\\
		&
		\begin{subfigure}[b]{0.24\textwidth}
			\centering 
			\includegraphics[width=\textwidth]{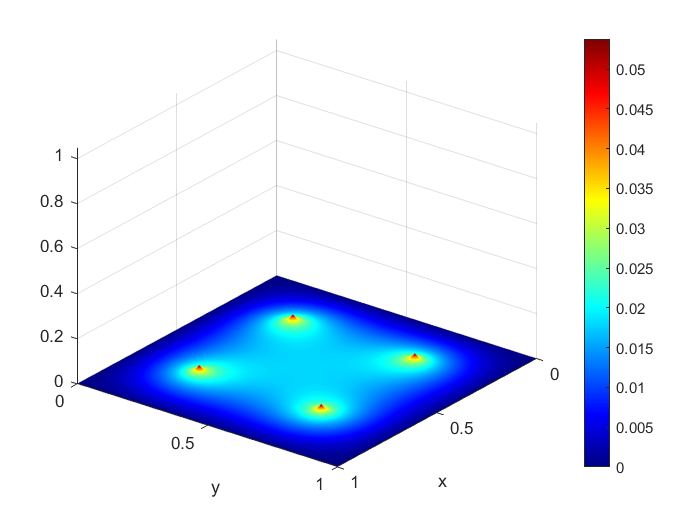}
			\caption{$u_{\varrho h,3}$, $f_\pm^{(4)}$}
		\end{subfigure}
		&
		\begin{subfigure}[b]{0.24\textwidth}
			\centering 
			\includegraphics[width=\textwidth]{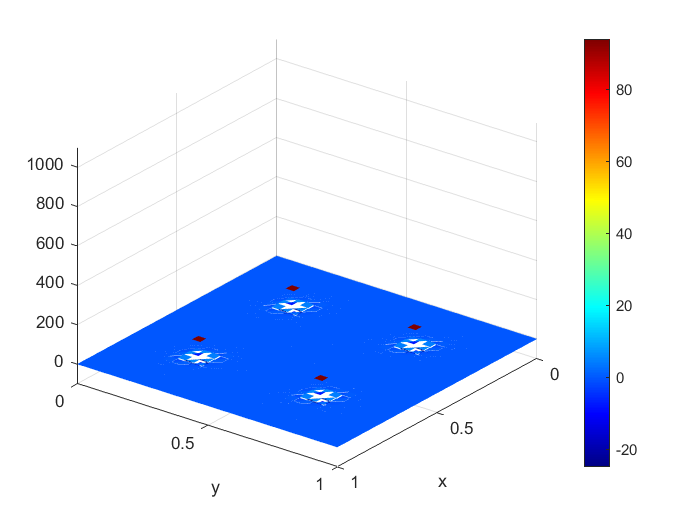}
			\caption{$z_{\varrho H,3}$, $f_\pm^{(4)}$}
		\end{subfigure}
		\\
		\begin{subfigure}[b]{0.24\textwidth}
			\centering 
			\includegraphics[width=\textwidth]{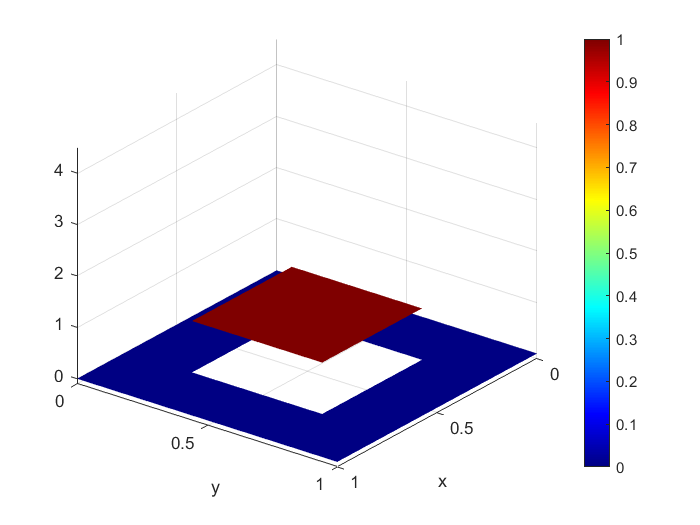}
			\caption{$\overline u_{3}$}
		\end{subfigure}
		&
		\begin{subfigure}[b]{0.24\textwidth}
			\centering 
			\includegraphics[width=\textwidth]{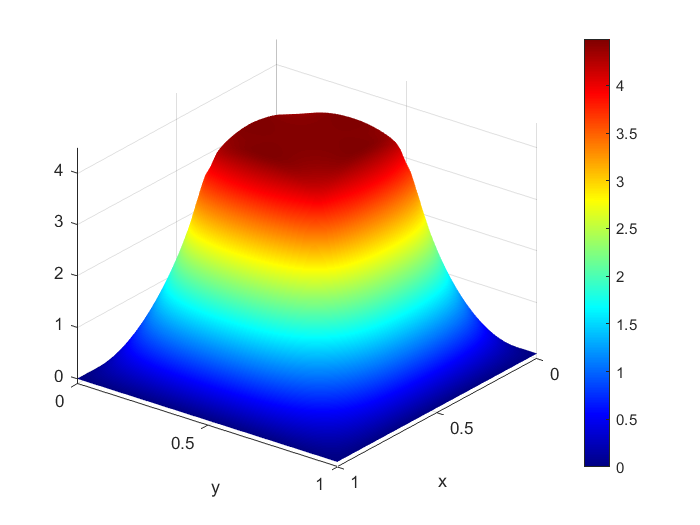}
			\caption{$u_{\varrho h,3}$, $f_\pm^{(5)}$}
		\end{subfigure}
		&
		\begin{subfigure}[b]{0.24\textwidth}
			\centering 
			\includegraphics[width=\textwidth]{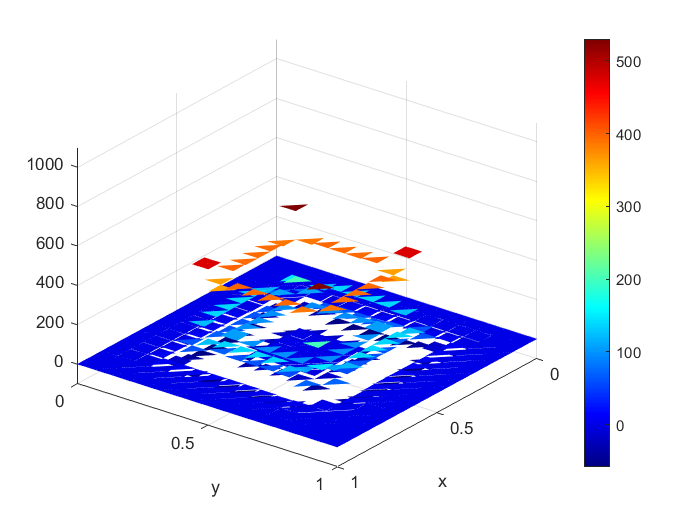}
			\caption{$z_{\varrho H,3}$, $f_\pm^{(5)}$}
		\end{subfigure}	
	\end{tabular}
\caption{Computed state $u_{\varrho h,3}$ for state constaints $g_{\pm}^{(1)}$, control constaints $f_\pm^{(4)}$ and $f_\pm^{(5)}$ on a mesh with $N=32768$ elements and $M=16129$ DoFs adn reconstruction of the control $z_{\varrho H,3}$ on a mesh with $N_H=2048$ elements ($1^{\text{st}}$ and $2^{\text{nd}}$ row) and with $N_H=512$ ($3^{\text{rd}}$ row).} 
\label{fig:state-reconstruction-u3}
\end{figure}

\section{Conclusions}
In this paper, we have described and analyzed
state and control constraints when considering elliptic distributed
optimal control problems with energy regularization. We have proven
optimal error estimates with respect to both the regularization
parameter $\varrho$, and the finite element mesh width $h$.
While for the solution of the nonlinear system we have used a
semi-smooth Newton method, the design of an overall efficient
iterative solution method including preconditioning
was not within the scope of this paper. This approach can be extended
to optimal control problems in three space dimensions, and to more
involved applications. Moreover, following existing work for
unconstrained optimal control problems subject to time dependent
problems such as the heat and the wave equation, we can include state
and control constraints also in these cases.

\section*{Acknowledgments}
This work has been partially supported by the Austrian
Science Fund (FWF) under the Grant Collaborative Research Center
TRR361/F90: CREATOR Computational Electric Machine Laboratory.

\bibliographystyle{abbrv}
\bibliography{constraint}

\end{document}